\DeclareMathAlphabet{\cat}{OT1}{cmss}{m}{sl}
\newtheorem*{theorem*}{Theorem}
\newtheorem{theorem}{Theorem}[section]
\newtheorem{proposition}[theorem]{Proposition}
\newtheorem{lemma}[theorem]{Lemma}
\newtheorem{corollary}[theorem]{Corollary}
\theoremstyle{definition}
\newtheorem{remark}[theorem]{Remark}
\newcommand{\tens}{\otimes}
\newcommand{\gmu}{\boldsymbol{\mu}}
\newcommand{\diag}{\mathrm{diag}}
\newcommand{\Ker}{\operatorname{Ker}}
\newcommand{\ind}{\operatorname{\hspace{0.3mm}ind}}
\newcommand{\Br}{\operatorname{Br}}
\newcommand{\Spec}{\operatorname{Spec}}
\newcommand{\GL}{\operatorname{GL}}
\newcommand{\gPGL}{\operatorname{\mathbf{PGL}}}
\newcommand{\gGSp}{\operatorname{\mathbf{GSp}}}
\newcommand{\gSL}{\operatorname{\mathbf{SL}}}
\newcommand{\gSO}{\operatorname{\mathbf{SO}}}
\newcommand{\gGamma}{\operatorname{\mathbf{\Gamma}}}
\newcommand{\gGL}{\operatorname{\mathbf{GL}}}
\newcommand{\gm}{\operatorname{\mathbb{G}}_m}
\newcommand{\gSpin}{\operatorname{\mathbf{Spin}}}
\newcommand{\ed}{\operatorname{ed}}
\newcommand{\rank}{\operatorname{rank}}
\newcommand{\red}{\operatorname{red}}
\renewcommand{\P}{\mathbb{P}}
\newcommand{\Z}{\mathbb{Z}}
\title[Essential dimension of semisimple groups of type $B$] 
{Essential dimension of semisimple groups of type $B$}
\author
[S.~Baek, Y.~Kim] {Sanghoon Baek, Yeongjong Kim}
\address[Sanghoon Baek]{Department of Mathematical Sciences, 
	KAIST,
	291 Daehak-ro, Yuseong-gu,
	Daejeon 34141,
	Republic of Korea}
\email{sanghoonbaek@kaist.ac.kr}
\urladdr{http://mathsci.kaist.ac.kr/~sbaek/}
\address[Yeongjong Kim]{Department of Mathematical Sciences, 
	KAIST,
	291 Daehak-ro, Yuseong-gu,
	Daejeon 34141,
	Republic of Korea}
\email{kimyj@kaist.ac.kr}
\begin{document}
	
\begin{abstract}
We determine the essential dimension of an arbitrary semisimple group of type $B$ of the form
\[G=\big(\gSpin(2n_{1}+1)\times\cdots \times \gSpin(2n_{m}+1)\big)/\gmu\]
over a field of characteristic $0$, for all $n_{1},\ldots, n_{m}\geq 7$, and a central subgroup $\gmu$ of $\gSpin(2n_{1}+1)\times\cdots \times \gSpin(2n_{m}+1)$ not containing the center of $\gSpin(2n_i+1)$ as a direct factor. We also find the essential dimension of $G$ for each of the following cases, where either $n_{i}=1$ for all $i$ or $m=2$, $n_{1}=1$, $2\leq n_{2}\leq 3$, $\gmu$ is the diagonal central subgroup for both cases.
\end{abstract}

\maketitle

\section{Introduction}

The essential dimension, introduced by Buhler and Reichstein in \cite{BR}, is a numerical invariant which measures the complexity of torsors under an algebraic group over a field. More precisely, given an algebraic group $G$ over a field $F$, consider the Galois cohomology functor
\[H^{1}(-,G):\cat{Fields}_{F}\to \cat{Sets},\]
which sends a field extension $K$ over $F$ from the category $\cat{Fields}_{F}$ of field extensions of $F$ to the set $H^{1}(K,G)$ of isomorphism classes of $G$-torsors over $\Spec(K)$ in the category of sets. The \emph{essential dimension} $\ed(\eta)$ of a $G$-torsor $\eta\in H^{1}(K,G)$ is defined by the minimal transcendental degree of an intermediate field $F\subset L\subset K$ such that $\eta$ lies in the image of $H^{1}(L,G)\to H^{1}(K,G)$. The essential dimension of $G$, denoted $\ed(G)$, is defined by the maximal value of $\ed(\eta)$, where the maximum ranges over all field extensions $K/F$ and all $\eta\in H^{1}(K,G)$. Indeed, the notion of essential dimension can be defined for any algebraic structure given by a functor from $\cat{Fields}_{F}$ to $\cat{Sets}$ and naturally extended to the categories fibered in groupoids \cite{BerhuyFavi, Merkurjev}.

In general, the accurate computation of the essential dimension of $G$ is a challenging problem. Even when $G$ is a semisimple group of a certain Dynkin type, the complexities of the corresponding torsors differ considerably depending on the central isogenies of $G$. For instance, by Hilbert theorem $90$ the essential dimension of $\gSL(n)$ is trivial for any $n\geq 2$, but on the other hand the essential dimension of the corresponding adjoint group $\gPGL(n)$ is at least $2$ for any $n$ and indeed the computation of $\ed(\gPGL(n))$ remains a major open problem. Similarly, it is known that the essential dimension of the special orthogonal group $\gSO(n)$ is equal to $n-1$ over a field of characteristic different from $2$ for any $n\geq 3$ \cite{Reichstein00}, but on the other hand the essential dimension of its simply connected cover $\gSpin(n)$ grows exponentially with $n$ \cite{BRV}.

The spin group $\gSpin(n)$, i.e., a split simply connected group of type $B$ or $D$, has been of particular interest to the research on the essential dimension and indeed, except for obvious cases, such groups are almost unique simple groups whose essential dimensions are completely calculated. The computation of the essential dimension of $\gSpin(n)$ is divided into two parts, $n\geq 15$ and $7\leq n\leq 14$ depending on the generic freeness of the corresponding (half-)spin representation of $\gSpin(n)$ (see Section \ref{sec:upperB}).

Assume $n\geq 15$. In \cite{BRV}, the essential dimension $\ed(\gSpin(n))$ over a field of characteristic $0$ was first computed by Brosnan, Reichstein, and Vistoli in the case where $n$ is not divisible $4$. The remaining cases ($n$ is divisible by $4$) were completed by Chernousov and Merkurev \cite{CM}. Recently, Garibaldi, Guralnick and Totaro showed that these results also hold over an arbitrary field in \cite{GG} (the case of characteristic not $2$) and in \cite{Totaro} (the case of characteristic $2$). For $7\leq n\leq 14$, the essential dimension $\ed(\gSpin(n))$ was computed based on a case-by-case analysis by Rost and Garibaldi over a field of characteristic not $2$ \cite{Garibaldi}. Totaro also showed that the same result holds over a field of characteristic $2$ in the case where $7\leq n\leq 10$ \cite{Totaro}.

In view of the successful computation of the essential dimension of spin groups, it is natural to ask whether this result can be extended to semisimple groups (not necessarily simple), i.e., we propose to consider an arbitrary semisimple group
\[G=\big(\gSpin(l_{1})\times\cdots \times \gSpin(l_{m})\big)/\gmu\]
of types $B$ and $D$, where $\gmu$ is a central subgroup of $\gSpin(l_{1})\times\cdots \times \gSpin(l_{m})$. In the present paper we prove that the essential dimension of a semisimple group $G$ of type $B$ (with $l_{i}\geq 15$ odd) is determined by the minimal sum of the tensor products of spin representations of $\gSpin(l_{i})$, where the tensor product is taken over a basis of the subgroup given by $\gmu$, and consequently we compute its essential dimension (see Theorem \ref{thm:main}). We also compute the essential dimension of each of the following three groups
\begin{equation}\label{simplestgroups}
G=\gSpin(3)^{m}/\gmu,\quad (\gSpin(3)\times \gSpin(5))/\gmu,\quad (\gSpin(3)\times \gSpin(7))/\gmu,
\end{equation}
where $\gmu$ is the diagonal central subgroup in each case (see Propositions \ref{prop:sec51} and \ref{prop:smallesttwo}), which can be viewed as the simplest cases of semisimple groups of type $B$ whose essential dimensions are not covered by our main theorem. By adopting a similar approach, together with Lemma \ref{lem:smalllower1} and Lemma \ref{lem:smalllower2}, we expect that the computation of the essential dimension of an arbitrary semisimple group of type $B$ can be completed. In particular, our methods developed in this paper are directly applicable to the case of a semisimple group of type $D$ (see Remark \ref{rmk:typeD}), which will be treated in a sequel paper. We also remark that the essential dimension of the quotient of a product of general linear groups by a central subgroup was studied in \cite{CR}.

This paper is structured as follows. In the following section, we first begin with a simple result on the essential dimension of a direct product of simply connected groups of types $B$ and $D$ and then we present our main results providing the essential dimension of a general semisimple group of type $B$. In Sections \ref{sec:upperB} and \ref{sec:lower}, we prove our main results, establishing upper and lower bounds on the proofs. In the last section we compute 
the essential dimension of the groups in (\ref{simplestgroups}). We also provide lower bounds on the essential dimension of quotients of small groups by the maximal central subgroups in Lemma \ref{lem:smalllower1} and Lemma \ref{lem:smalllower2}.

In the sequel we shall use the following notation. Unless explicitly stated otherwise, the base field $F$ is assumed to have characteristic zero (indeed, the characteristic $0$ assumption is necessary for Propositions \ref{prop:generic} and \ref{prop:mainupperB} in Section \ref{sec:upperB}). We write $\gmu_{n}$ and $\gm$ for the group of $n$th roots of unity and the multiplicative group over $F$, respectively. For an algebraic group $G$ we denote by $Z(G)$ and $G^{*}$ the center of $G$ and the character group of $G$, respectively. We shall write $\langle a_{1},\ldots, a_{n}\rangle$ for the diagonal quadratic form $a_{1}x_{1}^{2}+\cdots +a_{n}x_{n}^{2}$. We denote by $(a, b)$ the quaternion $F$-algebra generated by $\{1, i, j, k\}$ with the relations $i^{2}=a, j^{2}=b, ij=k=-ji$ for some $a,b\in F^{\times}$. We denote by $[n]$ the set $\{1,2,\ldots,n\}$. Finally, we write $\Br(F)$ for the Brauer group of $F$. Given a central simple $F$-algebra $A$, we write $\ind(A)$ for the degree of the unique division algebra of the class of $A$ in $\Br(F)$.

\medskip

\paragraph{\bf Acknowledgements.} 
This work was supported by Samsung Science and Technology Foundation under Project Number SSTF-BA1901-02.

\section{Main results}
Every semisimple group of type $B$ can be written as 
\begin{equation}\label{semisimpleB}
\big(\prod_{i\in I}\gSpin(2n_i+1)\big)/\gmu\times \prod_{j\in J}\gSO(2n_j+1)
\end{equation}
for some index sets $I$ and $J$, where $\gmu$ is a central subgroup not containing the center of $\gSpin(2n_i+1)$ as a direct factor for any $i\in I$. We call a semisimple group of type $B$ \emph{reduced} if the set $J$ in $(\ref{semisimpleB})$ is empty (i.e., it does not contain $\gSO(2n_j+1)$ factors). In this paper, we shall focus on the essential dimension of reduced semisimple groups of type $B$.

The essential dimension of a direct product of spin groups is determined by the sum of its direct components. The proof is a simple extension of the proof of \cite[Theorem 3-3]{BRV}.

\begin{proposition}\label{prop:direct}
Let $G_{i}=\gSpin(n_{i})$ over $F$, where $n_{i}\geq 15$ is not divisible by $4$ for all $1\leq i\leq m$. Then, we have
\[\ed(\prod_{i=1}^{m}G_{i})=\sum_{i=1}^{m}\ed(G_{i}).\]
\end{proposition}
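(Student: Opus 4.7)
The plan is to establish the two inequalities separately.

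\textbf{Upper bound.} The inequality $\ed(\prod_{i=1}^m G_i) \leq \sum_{i=1}^m \ed(G_i)$ is formal for any direct product of algebraic groups. Given a torsor $\eta \in H^1(K, \prod_i G_i) \cong \prod_i H^1(K, G_i)$, each component $\eta_i$ descends to a subfield $L_i \subset K$ with $\td(L_i/F) \leq \ed(G_i)$, and $\eta$ then descends to the compositum $L_1 \cdots L_m$, which has transcendence degree at most $\sum_i \ed(G_i)$.

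\textbf{Lower bound.} For the nontrivial inequality I would imitate the BRV strategy. First I would reduce to essential $2$-dimension: since $n_i \geq 15$ is not divisible by $4$, the BRV computation (combined with Chernousov--Merkurjev, though the $4 \mid n_i$ case is excluded here) gives $\ed(G_i) = \ed(G_i; 2)$, and $\ed(-) \geq \ed(-; 2)$ holds in general, so it suffices to prove
\[\ed\bigl(\prod\nolimits_i G_i;\, 2\bigr) \geq \sum\nolimits_i \ed(G_i; 2).\]
The center $C = \prod_i Z(G_i)$ is a finite $2$-group, since each $Z(G_i)$ is cyclic of order $2$ (when $n_i$ is odd) or $4$ (when $n_i \equiv 2 \pmod 4$). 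I would then invoke the Karpenko--Merkurjev lower bound $\ed(\prod_i G_i; 2) \geq \min_V \dim V - \dim \prod_i G_i$, where $V$ ranges over representations of $\prod_i G_i$ whose $C$-weights generate $C^*$.

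The key representation-theoretic input is that irreducible representations of $\prod_i G_i$ are external tensor products $W_1 \boxtimes \cdots \boxtimes W_m$ of irreducibles $W_i$ of $G_i$, with $C$-character equal to the tuple $(\omega_1,\ldots,\omega_m)$ of $Z(G_i)$-characters of the $W_i$. Since $C^* = \bigoplus_i Z(G_i)^*$ and each generator is supported in a single factor $Z(G_i)^*$, the most economical way to realize it is to take $W_j$ trivial for $j \neq i$; replacing any $W_j$ by a nontrivial irreducible multiplies the total dimension by $\dim W_j \geq 2$, far exceeding the additive cost. Consequently the minimal $V$ is a direct sum of representations pulled back from single factors, giving $\min \dim V = \sum_i \min \dim V_i$ with $V_i$ a minimal $Z(G_i)$-faithful representation of $G_i$. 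Combined with the identity $\ed(G_i; 2) = \min_{V_i} \dim V_i - \dim G_i$ from BRV, this yields the required bound.

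\textbf{Main obstacle.} The heart of the argument is the representation-theoretic lemma that the minimum is attained by direct sums of pull-backs from individual factors rather than by mixed external tensor products. This is essentially the BRV computation run in parallel across the factors; it relies on the strict superadditivity $\dim W_1 \cdot \dim W_2 > \dim W_1 + \dim W_2$ for $\dim W_j \geq 2$, together with the fact that every nontrivial irreducible representation of $\gSpin(n_i)$ with $n_i \geq 15$ has dimension at least $2^{\lfloor(n_i-1)/2\rfloor}$, so that any "diagonal" mixing is strictly wasteful compared with the factorwise optimum.
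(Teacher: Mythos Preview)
Your upper bound is fine and matches the paper's. The gap is in the lower bound: the inequality you invoke,
\[\ed\Big(\prod_i G_i;\, 2\Big) \;\geq\; \min_V \dim V \,-\, \dim \prod_i G_i,\]
with $V$ ranging over representations of $\prod_i G_i$ whose restriction to the center $C$ is faithful, is not the Karpenko--Merkurjev theorem and is not a theorem in this generality. Karpenko--Merkurjev gives $\ed(P;p)=\min_V\dim V$ only for a \emph{finite} $p$-group $P$; for a positive-dimensional reductive $G$ the available lower bound via the center is the index bound (Theorem~\ref{thm:lowerbounds}), which involves Brauer indices $\ind\phi_\eta(\chi)$ rather than representation dimensions. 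That the two coincide for spin groups is a separate input requiring one to exhibit quadratic forms with division even Clifford algebras (Lemma~\ref{lem:evenclifford}). As written, you are minimizing over representations of the wrong group, and the bound you assert has no independent justification.

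The paper's proof inserts exactly the missing step: it passes to the product $\prod_i\Delta(i)$ of the extraspecial $2$-subgroups, uses \cite[Lemma~2-2]{BRV} to get $\ed(\prod_i G_i)\geq\ed\bigl(\prod_i\Delta(i)\bigr)-\sum_i\dim G_i$, and then applies \cite[Theorem~5.1]{KM}---which genuinely \emph{is} a statement about finite $2$-groups---to obtain $\ed\bigl(\prod_i\Delta(i)\bigr)=\sum_i\ed(\Delta(i))$ directly, with no further representation-theoretic analysis needed. Your superadditivity heuristic for external tensor products is essentially what underlies that last equality, but it must be run at the level of the $\Delta(i)$, not the $G_i$. (Incidentally, your closing claim that every nontrivial irreducible of $\gSpin(n_i)$ has dimension at least $2^{\lfloor(n_i-1)/2\rfloor}$ is false---the vector representation has dimension $n_i$; you mean every irreducible with nontrivial central character.)
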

\begin{proof}
By \cite[Lemma 1.11]{BerhuyFavi}, we have $\ed(\prod_{i=1}^{m}G_{i})\leq \sum_{i=1}^{m}\ed(G_{i})$, thus it suffices to prove the opposite inequality.

It follows by \cite[(3-5)]{BRV} that $\ed(G_{i})=\ed(\Delta(i))-\dim(G_{i})$, where $\Delta(i)$ denotes the extraspecial $2$-subgroup of $G_{i}$ (see Section \ref{sec:small}). Hence, by \cite[Lemma 2-2]{BRV}, we have $$\ed(\prod_{i=1}^{m}G_{i})\geq \ed(\prod_{i=1}^{m}\Delta(i))-\sum_{i=1}^{m}\dim(G_{i}).$$
By \cite[Theorem 5.1]{KM}, we get $\ed(\prod_{i=1}^{m}\Delta(i))=\sum_{i=1}^{m}\ed(\Delta(i))$, thus the inequality $\ed(\prod_{i=1}^{m}G_{i})\geq \sum_{i=1}^{m}\ed(G_{i})$ holds. \end{proof}
\begin{remark}
Proposition \ref{prop:direct} also holds if $3\leq n_{i}\leq 14$ for all $1\leq i\leq m$, which can be proved using Theorem \ref{thm:lowersmall}.
\end{remark}

Next, we show that the essential dimension of a semisimple group of type $B$ given as the quotient by a maximal central subgroup is determined by the tensor product of spin representations as follows, which extends the preceding result on a direct product of spin groups.

\begin{proposition}\label{prop:mainprop}
Let $\tilde{G}=\prod_{i=1}^{m}\gSpin(2n_{i}+1)$ be a non-small semisimple group of type $B$ over $F$ with $n_{i}\geq 1$, i.e., $\tilde{G}$ is not equal to one of the groups in $(\ref{small1})$, $(\ref{small2})$, $(\ref{small3})$, and  $(\ref{small4})$. Let $G=\tilde{G}/\gmu$, where $\gmu$ is a maximal subgroup of $Z(\tilde{G})=(\gmu_{2})^{m}$ given by the kernel of the product map $(\gmu_{2})^{m}\to \gmu_{2}$. Then, 
\[\ed(G)=\dim(V)-\dim(G)=2^{(n_{1}+\cdots +n_{m})}-\sum_{i=1}^{m}2n_{i}^{2}+n_{i},\]
where $V$ is the tensor product of spin representations of $\tilde{G}$.
\end{proposition}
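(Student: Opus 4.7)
The plan is to bracket $\ed(G)$ between matching upper and lower bounds, both equal to $\dim(V)-\dim(G)$.

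For the upper bound, I would first check that $V$ descends to a representation of $G$: on each spin representation $V_{i}$ of $\gSpin(2n_{i}+1)$ the center $\gmu_{2}$ acts by $-1$, so an element $(\epsilon_{1},\ldots,\epsilon_{m})\in(\gmu_{2})^{m}$ acts on $V=V_{1}\otimes\cdots\otimes V_{m}$ by the scalar $\prod_{i}\epsilon_{i}$, which is trivial precisely on the kernel $\gmu$. The next task is to show this $G$-action is generically free; this is exactly the content of Propositions \ref{prop:generic} and \ref{prop:mainupperB} in Section \ref{sec:upperB}, where the ``non-small'' hypothesis is used to rule out the exceptional low-rank cases. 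Combined with the general inequality $\ed(H)\le \dim(V)-\dim(H)$ for any generically free representation $V$ of an algebraic group $H$, this yields $\ed(G)\le \dim(V)-\dim(G)$.

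For the lower bound, I would follow the Brosnan--Reichstein--Vistoli strategy used for a single spin group, but now applied to a suitably chosen finite $2$-subgroup of $G$. Let $\Delta(n_{i})\subset\gSpin(2n_{i}+1)$ be the extraspecial $2$-subgroup of order $2^{1+2n_{i}}$ that appears in the proof of Proposition \ref{prop:direct}, and set $\Delta=(\prod_{i}\Delta(n_{i}))/\gmu\subset G$, a finite $2$-group whose center is $(\gmu_{2})^{m}/\gmu\cong\gmu_{2}$. The irreducible representations of $\prod_{i}\Delta(n_{i})$ are tensor products $\bigotimes_{i}W_{i}$, where each $W_{i}$ is either one of the $2^{2n_{i}}$ characters of $\Delta(n_{i})$ or the unique $2^{n_{i}}$-dimensional faithful irreducible representation. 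Since the characters of an extraspecial $2$-group are trivial on its center, such a tensor product descends to $\Delta$ and is nontrivial on $Z(\Delta)$ if and only if every $W_{i}$ equals the faithful piece $V_{i}$. This gives a single faithful irreducible representation of $\Delta$, of dimension $2^{n_{1}+\cdots+n_{m}}$, and hence every faithful representation of $\Delta$ has dimension at least $2^{n_{1}+\cdots+n_{m}}$. By the Karpenko--Merkurjev theorem on essential dimension of $p$-groups, $\ed(\Delta)=\ed(\Delta;2)=2^{n_{1}+\cdots+n_{m}}$. Applying the BRV-style inequality $\ed(G;2)\ge \ed(\Delta;2)-\dim(G)$ in the form of \cite[Lemma 2-2]{BRV} to $\Delta\subset G$ then closes the argument.

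The main obstacle will be verifying the hypotheses of the BRV inequality for the quotient group $G$: concretely, that the centralizer $C_{G}(\Delta)$ has trivial identity component, which should follow from the analogous statement for each factor $\gSpin(2n_{i}+1)$ together with the fact that passing to the central quotient by $\gmu$ does not create new commuting elements modulo the center. The upper bound is also delicate, but in that direction the hard work is absorbed into Propositions \ref{prop:generic} and \ref{prop:mainupperB}, which is precisely where the ``non-small'' hypothesis enters.
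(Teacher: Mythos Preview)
Your upper bound is exactly the paper's: it is Corollary \ref{cor:maximal}, which follows from Proposition \ref{prop:generic} alone (Proposition \ref{prop:mainupperB} is the generalization to arbitrary $\gmu$ and is not needed here).

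For the lower bound you take a different, but correct, route. The paper invokes Proposition \ref{prop:lower}: apply Merkurjev's index bound (Theorem \ref{thm:lowerbounds}) to the extension $1\to\gmu_{2}\to G\to\prod_{i}\gSO(2n_{i}+1)\to 1$ and exhibit, via Lemma \ref{lem:evenclifford}, quadratic forms $q_{i}$ with $\bigotimes_{i}C_{0}(q_{i})$ division of degree $2^{\sum n_{i}}$. You instead observe that $\Delta=(\prod_{i}\Delta(2n_{i}+1))/\gmu$ is again extraspecial of order $2^{1+2\sum n_{i}}$ (one checks $Z(\Delta)=[\Delta,\Delta]=\gmu_{2}$ and $\Delta/Z(\Delta)$ is elementary abelian), get $\ed(\Delta)=2^{\sum n_{i}}$ from Karpenko--Merkurjev, and finish via \cite[Lemma 2-2]{BRV} as in the proof of Proposition \ref{prop:direct}. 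The two arguments rest on the same machinery; the paper's packaging has the advantage that Proposition \ref{prop:lower} is stated once for arbitrary $\gmu$ and feeds directly into Theorem \ref{thm:main}, whereas your extraspecial-subgroup argument is tailored to the maximal $\gmu$ (for smaller $\gmu$ the quotient $(\prod_{i}\Delta_{i})/\gmu$ has center of rank $>1$ and one must minimize over bases, recovering Proposition \ref{prop:lower} anyway). The centralizer hypothesis you flag does hold: the image of $C_{G}(\Delta)$ in $\prod_{i}\gSO(2n_{i}+1)$ centralizes $\prod_{i}D(2n_{i}+1)$, hence lies in the finite diagonal subgroup, so $C_{G}(\Delta)$ is finite.
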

\begin{proof}
It follows by Corollary \ref{cor:maximal} and Proposition \ref{prop:lower}.
\end{proof}

Now we determine the essential dimension of an arbitrary reduced semisimple group of type $B$, which generalize Propositions \ref{prop:direct} and \ref{prop:mainprop} to an arbitrary central subgroup.

\begin{theorem}\label{thm:main}
Let $G=(\prod_{i=1}^{m}G_{i})/\gmu$ be a reduced semisimple group of type $B$ over $F$, where $G_{i}=\gSpin(2n_{i}+1)$ and $\gmu$ is a central subgroup such that for all $i$ either
$n_{i}\geq 7$ or $n_{i}\geq 3$ and $G_{i}$ is not a direct factor of $G$. Let $R$ be the subgroup of $Z(\prod_{i=1}^{m}G_{i})^{*}=(\Z/2\Z)^{m}$ whose quotient is the character group $\gmu^{*}$. For any $r=(r_{1},\ldots, r_{m})\in R$, we set $V[r]=\bigotimes_{i \in \operatorname{supp}(r)} V(i)$, where $V(i)$ denotes the spin representation of $G_{i}$ and  $\operatorname{supp}(r)=\{i\in[m]\,|\,r_i\neq 0\}$. Then,
\begin{align*}\ed(G)&=\min_{B}\big\{\sum_{r\in B} \dim V[r]\big\}-\dim(G)\\
	&=\min_{B}\Big\{\sum_{r\in B} \big(\prod_{i \in\operatorname{supp}(r)}2^{n_{i}}\big)\Big\}-\sum_{i=1}^{m}2n_{i}^{2}+n_{i},
\end{align*}
where the minimum ranges over all bases $B$ of $R$.
\end{theorem}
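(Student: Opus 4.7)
The plan is to establish matching upper and lower bounds for $\ed(G)$, both governed by the combinatorial quantity $\min_{B} \sum_{r \in B} \dim V[r]$ over bases $B$ of $R$. The role of $R$ is structural: $R \subset Z(\tilde{G})^{*}$ consists exactly of the characters trivial on $\gmu$, so each $V[r]$ with $r \in R$ descends to a representation of $G = \tilde{G}/\gmu$, and for any basis $B$ of $R$ the direct sum $V_{B} := \bigoplus_{r \in B} V[r]$ is faithful on $G$ because the joint kernel of the central characters of the $V[r]$'s, $r \in B$, inside $Z(\tilde{G})$ is exactly $\gmu$.

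For the upper bound I would show that $V_{B}$ is generically free on $G$ for any basis $B$ of $R$; this yields $\ed(G) \le \dim V_{B} - \dim G$ via Proposition \ref{prop:generic}. Generic freeness reduces to a stabilizer computation at a generic point, using generic freeness of each individual spin representation $V(i)$ when $n_{i} \ge 7$ (Proposition \ref{prop:mainupperB}) together with the tensor-product structure of each $V[r]$. In the borderline cases where $3 \le n_{i} < 7$ and $G_{i}$ is not a direct factor of $G$, the hypothesis on $\gmu$ forces every basis $B$ of $R$ to contain some $r$ with $i \in \operatorname{supp}(r)$ and $|\operatorname{supp}(r)| \ge 2$; the extra tensor factors contributed by the other $G_{j}$'s inflate $V[r]$ enough to compensate for the failure of generic freeness of the bare $V(i)$. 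Taking the minimum over bases yields the upper bound.

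For the lower bound I would pass to the finite 2-subgroup $H := \big(\prod_{i=1}^{m} \Delta(i)\big)/\gmu \subset G$, where $\Delta(i) \subset G_{i}$ is the extraspecial 2-subgroup (so $Z(\Delta(i)) = Z(G_{i})$ and $Z(\prod_{i} \Delta(i)) = Z(\tilde{G})$), and apply the Karpenko--Merkurjev theorem at $p = 2$. The faithful irreducibles of $\prod_{i} \Delta(i)$ are tensor products of faithful and trivial irreducibles of the $\Delta(i)$'s, and the one with central character $r \in (\Z/2\Z)^{m}$ has dimension $\prod_{i \in \operatorname{supp}(r)} 2^{n_{i}} = \dim V[r]$; those trivial on $\gmu$ are exactly those with $r \in R$. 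Karpenko--Merkurjev's formula for a finite $p$-group then yields
\[\ed(H) = \ed(H;2) = \min_{B} \sum_{r \in B} \dim V[r],\]
and the inequality $\ed(G) \ge \ed(H) - \dim G$ from \cite[Lemma 2-2]{BRV} supplies the matching lower bound.

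The main obstacle is the generic-freeness step: generic freeness of each individual spin representation on $\gSpin(2n_{i}+1)$ for $n_{i} \ge 7$ is known, but generic freeness of a direct sum of tensor products $\bigoplus_{r \in B} V[r]$ on the quotient $G$ requires a careful stabilizer analysis across tensor factors, and the small-$n_{i}$ exceptional cases must be handled by exploiting the hypothesis that $G_{i}$ is not a direct factor to ensure the relevant $V[r]$'s carry additional tensor factors. A secondary point is that the optimizing bases on the upper and lower sides need not a priori coincide, but since each bound equals $\min_{B}$ of the same quantity, the two bounds match automatically.
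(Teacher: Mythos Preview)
Your plan is correct in outline. For the upper bound you are following the paper's approach (Proposition~\ref{prop:mainupperB} and Corollary~\ref{cor:upperB}), but your sketch of the generic-freeness mechanism is slightly off: the argument is not assembled from generic freeness of the individual $V(i)$'s (which indeed fails for $n_i<7$) but from generic freeness of each full tensor product $V[r]$ under $G[r]:=\prod_{i\in\operatorname{supp}(r)}G_i$ modulo its maximal central kernel. That is Proposition~\ref{prop:generic}, which rests on the \`Elashvili--Popov classification of generic stabilizers, not on anything about the separate $V(i)$. The hypothesis on the $n_i$ is exactly what guarantees every such $G[r]$ is non-small: if $|\operatorname{supp}(r)|=1$ then $r=e_i\in R$ forces $G_i$ to be a direct factor, hence $n_i\ge 7$; if $|\operatorname{supp}(r)|\ge 2$ then having all $n_j\ge 3$ already avoids the small list. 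The summands $V[r]$ for $r\in B$ are then chained together by the iterated-stabilizer Lemma~\ref{lem:transitive}, peeling off one $V[r]$ at a time; this transitivity step is the piece your sketch leaves out.

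For the lower bound you take a genuinely different route from the paper. The paper (Proposition~\ref{prop:lower}) passes to the central extension $1\to(\gmu_2)^m/\gmu\to G\to\prod_i\gSO(2n_i+1)\to 1$ and applies the index bound of Theorem~\ref{thm:lowerbounds}, exhibiting quadratic forms $q_i$ over a purely transcendental extension for which $\bigotimes_{i\in\operatorname{supp}(r)}C_0(q_i)$ is a division algebra of index $\prod_{i\in\operatorname{supp}(r)}2^{n_i}$ for every nonzero $r\in R$. Your approach via the finite $2$-subgroup $H=(\prod_i\Delta(i))/\gmu$ together with Karpenko--Merkurjev is equally valid---the paper itself uses it for the direct-product case in Proposition~\ref{prop:direct}---and is arguably more direct here: once one checks that $Z(H)=(\gmu_2)^m/\gmu$ (this uses the reduced hypothesis), the quantity $\min_B\sum_{r\in B}\dim V[r]$ is literally the minimal dimension of a faithful representation of $H$, so the formula drops out of \cite{KM} without constructing any torsors. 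The paper's method has the complementary virtue of producing an explicit field extension and torsor over which the bound is attained.
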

\begin{proof}
It follows by Corollary \ref{cor:upperB} and Proposition \ref{prop:lower}.
\end{proof}

We remark that our assumption on the integers $n_{i}$ in Theorem \ref{thm:main} can be slightly generalized (See Remark \ref{rmk:assumption}). 

A basis $B$ of $R$ in Theorem \ref{thm:main} is called \emph{minimal} if it gives the essential dimension of $G$. As the number of bases of $R$ in Theorem \ref{thm:main} is finite, we can find a minimal basis of $R$. However, in practice, one can use an efficient method called a greedy algorithm \cite[Lemma 1.8.3]{Oxley} as follows: For a nonzero element $r\in R$, the corresponding value $2^{\sum_{i\in \operatorname{supp}(r)}n_i}$ is called the \emph{weight} of $r$. We enumerate all nonzero vectors $r_1,\ldots,r_N$ with $N=2^{m-k}-1$ so that the corresponding weights are in non-decreasing order. Set $B=\emptyset$ initially and then add $r_i$ to $B$ successively from $i=1$ whenever $B\cup \{r_i\}$ is linearly independent. Then, we obtain a minimal basis $B$ of $R$. For instance, we have

\begin{corollary}
Let $G=(\prod_{i=1}^{m}\gSpin(2n_{i}+1))/\gmu$, where $m\geq 2$, $n_i\geq 3$ for all $i$, and $\gmu$ is the diagonal central subgroup. Then,
\begin{equation*}
    \ed(G) = \sum_{i\neq j}^m 2^{n_j+n_i} - \sum_{i=1}^{m}2n_{i}^{2}+n_{i},
\end{equation*}
where $n_j$ is a minimum of all integers $n_i$.
\end{corollary}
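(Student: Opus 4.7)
The plan is to apply Theorem \ref{thm:main} and compute $\min_{B}\sum_{r\in B}\dim V[r]$ explicitly in the diagonal case. First I would identify $R$: dualizing the diagonal embedding $\gmu_{2}\hookrightarrow(\gmu_{2})^{m}$, $x\mapsto(x,\dots,x)$, shows that $R$ is the kernel of the sum map $(\mathbb{Z}/2\mathbb{Z})^{m}\to\mathbb{Z}/2\mathbb{Z}$, so $R$ consists of the vectors of even support and $\dim_{\mathbb{F}_{2}}R=m-1$. I would also note that the hypotheses of Theorem \ref{thm:main} are satisfied: the diagonal central $\gmu_{2}$ obstructs any splitting of $G$ having a single $\gSpin(2n_{i}+1)$ as a direct factor once $m\geq 2$. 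For the upper bound I take $B_{0}=\{e_{j}+e_{i}:i\neq j\}$, with $j$ chosen so that $n_{j}=\min_{i}n_{i}$; these $m-1$ vectors lie in $R$ and are easily seen to be linearly independent, hence form a basis with $\sum_{r\in B_{0}}\dim V[r]=\sum_{i\neq j}2^{n_{j}+n_{i}}$.

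The heart of the proof is the matching lower bound: for every basis $B$ of $R$ one needs $\sum_{r\in B}2^{\sum_{i\in\operatorname{supp}(r)}n_{i}}\geq\sum_{i\neq j}2^{n_{j}+n_{i}}$. I would form the $(m-1)\times m$ matrix $M$ whose rows are the elements of $B$. Since $\rank M=m-1$ and every row lies in $R$, the columns of $M$ sum to zero over $\mathbb{F}_{2}$, so deleting the $j$-th column yields an invertible matrix $M'$ of size $m-1$. Over $\mathbb{F}_{2}$ the determinant equals the permanent (every sign is $1$), so $\det M'\neq 0$ is precisely the statement that $M'$ admits an odd, hence positive, number of generalized diagonals; any such diagonal provides a bijection $\sigma\colon B\to[m]\setminus\{j\}$ with $\sigma(r)\in\operatorname{supp}(r)$ for every $r\in B$. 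Because $\operatorname{supp}(r)$ has even cardinality at least $2$, I may choose a second index $\ell\in\operatorname{supp}(r)\setminus\{\sigma(r)\}$, and the minimality of $n_{j}$ gives $\sum_{i\in\operatorname{supp}(r)}n_{i}\geq n_{\sigma(r)}+n_{\ell}\geq n_{\sigma(r)}+n_{j}$. Exponentiating and summing along the bijection $\sigma$ produces the desired inequality.

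The main obstacle I anticipate is the construction of the bijection $\sigma$; this is where the specific structure of $R$ as the kernel of the sum map enters decisively. Once both bounds are in hand, they combine to $\min_{B}\sum_{r\in B}\dim V[r]=\sum_{i\neq j}2^{n_{j}+n_{i}}$, and subtracting $\dim G=\sum_{i=1}^{m}(2n_{i}^{2}+n_{i})$, the sum of the dimensions of the simply connected factors $\gSpin(2n_{i}+1)$, yields the formula asserted in the corollary.
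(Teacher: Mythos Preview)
Your argument is correct, and it takes a genuinely different route from the paper's. The paper invokes the matroid greedy algorithm (Oxley, Lemma 1.8.3), stated just before the corollary: listing the nonzero elements of $R$ by non-decreasing weight $2^{\sum_{i\in\operatorname{supp}(r)}n_{i}}$ and greedily picking independent ones produces a minimum-weight basis, and after reordering so that $n_{1}\leq\cdots\leq n_{m}$ the first $m-1$ independent vectors encountered are exactly $e_{1}+e_{2},\,e_{1}+e_{3},\,\ldots,\,e_{1}+e_{m}$. This gives the minimal basis in one stroke, with optimality outsourced to the matroid lemma. Your approach instead proves the optimality inequality by hand: you observe that the column-sum-zero relation forces every $(m-1)$-subset of columns of $M$ to be independent, then exploit $\det=\mathrm{perm}$ over $\mathbb{F}_{2}$ to extract a system of distinct representatives $\sigma\colon B\to[m]\setminus\{j\}$, and combine this with $\lvert\operatorname{supp}(r)\rvert\geq 2$ and the minimality of $n_{j}$ to bound each summand from below. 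Your route is more self-contained and avoids the external matroid reference; the paper's is shorter and, because the greedy algorithm is stated in full generality, adapts immediately to other central subgroups $\gmu$.
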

\begin{proof}
We may assume that $n_1\leq\cdots\leq n_m$. Applying the argument as above, we obtain a minimal basis $\{(1,1,0,\ldots,0),(1,0,1,0,\ldots,0),\ldots,(1,0,\ldots,0,1)\}$ of $R$. 
\end{proof}

\section{Upper bounds for semisimple groups of type $B$}\label{sec:upperB}

Let $G$ be an algebraic group over $F$ acting on a finite dimensional vector space $V$. We say that a subgroup $H\subset G$ is a \emph{generic stabilizer} (or a \emph{stabilizer in general position}) for the action if there is a dense open subset $U$ of $V$ such that the stabilizer of any $v\in U$ is conjugate to $H$. In particular, we say that the $G$-action on $V$ is \emph{generically free} if the trivial subgroup of $G$ is a generic stabilizer for this action. Every generically free $G$-action yields an upper bound of the essential dimension of $G$ as follows.

\begin{lemma}\cite[Theorem 3.4]{Reichstein00},\cite[Lemma 4.11]{BerhuyFavi}\label{lem:generically}
Let $G$ be an algebraic group over $F$. If $G$ acts generically free on a vector space $V$ over $F$, then $\ed(G)\leq \dim(V)-\dim(G)$.
\end{lemma}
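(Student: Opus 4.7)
The plan is to construct a distinguished generic $G$-torsor from the given linear action and to show that every $G$-torsor is a specialization of it, with parameter space of the desired dimension.

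First, I would exploit the generic freeness hypothesis to produce a $G$-invariant dense open subvariety $U\subseteq V$ on which $G$ acts freely. Standard descent (or Rosenlicht-type) results then yield a geometric quotient $\pi\colon U\to X:=U/G$ that is a $G$-torsor, with $\dim X=\dim V-\dim G$. Restricting $\pi$ to the generic point provides a torsor $\eta_{\gen}\in H^{1}(F(X),G)$, which I will argue is versal in a strong sense.

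The central step is the following versality statement: for every field extension $K/F$ and every $\eta\in H^{1}(K,G)$, there exists $x\in X(K)$ with $x^{*}(\pi)\simeq\eta$. To prove it, I would form the twisted variety $V^{\eta}:=\eta\times^{G}V$. Since $V$ is a linear representation and $H^{1}(K,\GL(V))=0$ by Hilbert~$90$, the $K$-variety $V^{\eta}$ is isomorphic to $\A^{\dim V}_{K}$, so the open subvariety $U^{\eta}\subseteq V^{\eta}$ is a nonempty open in affine space over the infinite field $K$ and must carry a $K$-point. Unwinding, such a point is the same data as a $G$-equivariant morphism $\eta\to U_{K}$; its image in $X_{K}$ consists of a single $K$-point $x$, and by construction the fiber of $\pi_{K}$ over $x$ is isomorphic to $\eta$ as a $G$-torsor, giving the claim.

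Once versality is in hand, the essential dimension bound is immediate. Given $\eta\in H^{1}(K,G)$, let $x\in X(K)$ be the versal point produced above and let $L\subseteq K$ be the residue field of the scheme-theoretic image of $x$. Then $\eta$ descends to $L$, and since $L/F$ is finitely generated of transcendence degree at most $\dim X=\dim V-\dim G$, one obtains $\ed(\eta)\leq\dim V-\dim G$; taking the supremum over all $\eta$ yields the stated inequality. The main delicate point in the proof is verifying that $V^{\eta}$ is split as a $K$-variety, which reduces to Hilbert~$90$ for $\GL(V)$ and should therefore present no serious obstacle.
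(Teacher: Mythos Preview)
Your argument is correct and is precisely the standard proof of this fact; the only point to keep in mind is that the existence of a $G$-torsor quotient $U\to U/G$ after shrinking requires smoothness of $G$, which is automatic here since $\ch(F)=0$. Note, however, that the paper does not actually prove this lemma: it merely quotes it from \cite[Theorem~3.4]{Reichstein00} and \cite[Lemma~4.11]{BerhuyFavi}, and the argument you have written is essentially the one found in those references.
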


Consider a split simply connected semisimple group $\tilde{G}$ of type $B$ over $F$ of one of the following: 
\begin{align}
&\gSpin(2n+1), 1\leq n\leq 6, \label{small1}\\
& \gSpin(3)\times \gSpin(2n+1), 1\leq n\leq 5,\, \gSpin(5)\times \gSpin(5),\,\gSpin(5)\times \gSpin(7),\label{small2}\\
&\gSpin(3)^{3},\, \gSpin(3)^{2}\times \gSpin(5),\, \gSpin(3)^{2}\times \gSpin(7),\label{small3}\\
& \gSpin(3)^{4}.\label{small4} 
\end{align}

Such groups will be called \emph{small} (otherwise called \emph{non-small}). The generic freeness for quotients of non-small semisimple groups of type $B$ by the maximal central subgroups is proven as follows.

\begin{proposition}\cite{BRV, Ela2, Popov2}\label{maximalgeneric}\label{prop:generic}
Let $\tilde{G}=\prod_{i=1}^{m}\gSpin(2n_{i}+1)$ be a non-small semisimple group of type $B$ over $F$ with $n_{i}\geq 1$. Let $G=\tilde{G}/\gmu$, where $\gmu$ is a maximal subgroup of $Z(\tilde{G})=(\gmu_{2})^{m}$ given by the kernel of the product map $(\gmu_{2})^{m}\to \gmu_{2}$. Then, the action of $G$ on the tensor product $\bigotimes_{i=1}^{m}V(i)$ of the spin representations $V(i)$ of $\gSpin(2n_{i}+1)$ is generically free.
\end{proposition}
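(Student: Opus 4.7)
The plan is to show that the stabilizer in $\tilde{G}$ of a sufficiently generic $v\in V$ equals $\gmu$. First observe that $Z(\tilde{G})=(\gmu_{2})^{m}$ acts on $V=\bigotimes_{i}V(i)$ through the product map $(\gmu_{2})^{m}\to\gmu_{2}$, since each factor of the center acts by $\pm 1$ on the corresponding spin representation and trivially on the others. The kernel of this product map is exactly $\gmu$, so $\gmu$ stabilizes every $v$ automatically and the $G$-action on $V$ is well-defined; the task is to rule out any stabilizer outside $\gmu$.

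When every $n_{i}\geq 7$, I would take $v=v_{1}\otimes\cdots\otimes v_{m}$ a generic pure tensor. Then $(g_{1},\ldots,g_{m})$ stabilizes $v$ iff $g_{i}v_{i}=\lambda_{i}v_{i}$ for scalars with $\prod_{i}\lambda_{i}=1$. The essential input, drawn from the Andreev--Vinberg--Elashvili--Popov classification of generic stabilizers of simple irreducible representations (cited as \cite{Ela2, Popov2}, and also exploited in \cite{BRV}), is that for $n_{i}\geq 7$ the projective action of $\gSpin(2n_{i}+1)$ on $\mathbb{P}(V(i))$ has trivial generic stabilizer, i.e.\ $\{g\in G_{i}:gv_{i}\in F^{\times}v_{i}\}=Z(G_{i})=\gmu_{2}$ for generic $v_{i}$; dimensionally this is plausible since $2^{n_{i}}=\dim V(i)>\dim G_{i}=2n_{i}^{2}+n_{i}$ in this range. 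Consequently each $g_{i}\in\gmu_{2}$, the scalar $\lambda_{i}$ is the corresponding central character, and the constraint $\prod_{i}\lambda_{i}=1$ cuts out exactly $\gmu$.

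For the remaining non-small cases, where some $n_{i}\leq 6$, the projective generic stabilizer on $V(i)$ strictly contains $Z(G_{i})$, so the pure-tensor argument breaks down. Here I would instead use a generic non-decomposable element: writing $V=V(i)\otimes W$ with $W=\bigotimes_{j\neq i}V(j)$, take $v=\sum_{k}e_{k}\otimes w_{k}$ for $\{e_{k}\}$ a basis of $V(i)$ and $w_{k}\in W$ generic. A stabilizer element then forces $\prod_{j\neq i}g_{j}$ to preserve the subspace $\mathrm{span}(w_{k})\subset W$, with the $G_{i}$-action on $V(i)$ recorded by the associated transition matrix; invoking the generic freeness of the $\prod_{j\neq i}G_{j}$-action on a suitable Grassmannian inside $W$ (inductively on $m$, via the tensor-product techniques of \cite{BRV} together with the non-small hypothesis) pins down each $g_{j}$ ($j\neq i$) in the center and reduces the $G_{i}$ component to a scalar, recovering $\gmu$ once again.

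The main obstacle is precisely this last case analysis at the boundary of the non-small range, e.g.\ $\gSpin(3)\times\gSpin(13)$ or $\gSpin(3)^{2}\times\gSpin(9)$: here neither individual generic freeness on each tensor factor nor the BRV shifting lemma applies directly, and one must combine the explicit generic-stabilizer tables of \cite{Ela2, Popov2} with dimension estimates on Grassmannians inside $W$ to simultaneously rule out positive-dimensional stabilizers and nontrivial finite non-central ones.
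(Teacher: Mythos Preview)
Your approach diverges from the paper's and leaves real gaps. The paper's argument is essentially a one-line lookup: for $m=1$ it cites \cite[Lemma 3-7]{BRV}; for $m\geq 2$, the tables in \cite[Theorems~5,~6, Tables~5,~6]{Ela2} and \cite[Theorem~1, Tables~0,~1]{Popov2} classify generic stabilizers for irreducible representations of \emph{semisimple} (not merely simple) groups, and hence apply directly to $\tilde{G}=\prod_i G_i$ acting on the full tensor product $V=\bigotimes_i V(i)$. Those tables say that for every non-small $\tilde{G}$ the generic stabilizer $S$ lies in $Z(\tilde{G})$; then $S=\Ker(\rho)$, and Schur's lemma identifies $\Ker(\rho)$ with $\gmu$. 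There is no reduction to the individual factors $V(i)$ and no separate boundary analysis.

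Your pure-tensor reduction for $n_i\geq 7$ rests on the claim that the \emph{projective} generic stabilizer of $G_i$ on $\mathbb{P}(V(i))$ is exactly $\gmu_2$. What \cite{BRV} provides is that the \emph{linear} generic stabilizer on $V(i)$ is trivial; passing to lines only shows the projective stabilizer embeds as a finite cyclic subgroup of $\gm$ containing $\gmu_2$, and ruling out anything larger is a separate statement that you do not justify. The references \cite{Ela2, Popov2} you invoke for this are precisely the semisimple tables that already settle the action on $\bigotimes_i V(i)$ in one stroke, so citing them to support a factor-by-factor projective claim is both a misreading of what they contain and an unnecessary detour. For the boundary non-small cases with some $n_i\leq 6$, your Grassmannian sketch is, as you concede, not a proof; those cases (e.g.\ $\gSpin(3)\times\gSpin(13)$, $\gSpin(3)^{2}\times\gSpin(9)$) are exactly what makes the direct appeal to the Elashvili--Popov tables indispensable.
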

\begin{proof}
If $\tilde{G}=G$ is simple, the statement follows by \cite[Lemma 3-7]{BRV}. Now we assume $m\geq 2$. Let $V=\bigotimes_{i=1}^{m}V(i)$ and let $\rho: \tilde{G}\to \GL(V)$ be the tensor product representation of $\tilde{G}$. By \cite[Theorems $5$, $6$, Tables 5, 6]{Ela2} and \cite[Theorem 1, Tables $0$, $1$]{Popov2}, for every non-small $\tilde{G}$ the stabilizer subgroup $S$ of $v\in V$ in general position is contained in $Z(\tilde{G})$. Hence,  $S=\Ker(\rho)\subset Z(\tilde{G})$. Therefore, by Schur's lemma we have $\Ker(\rho)=\gmu$ and the statement follows.\end{proof}

\begin{remark}
Indeed, for non-simple small groups $\tilde{G}$, the identity component $S^{\circ}$ of a generic stabilizer $S$ in $G=\tilde{G}/\gmu$, where $\gmu$ is the maximal central subgroup as in Proposition \ref{prop:generic}, is given as follows \cite{Ela2, Popov2}: 

\smallskip

\begin{center}
	
	\begin{tabular}{ c | c || c | c }
		\hline
		$\tilde{G}$ & $S^{\circ}$ & $\tilde{G}$ & $S^{\circ}$ \rule{0pt}{2.5ex} \\
		\hline
		
		$\gSpin(3)\times \gSpin(5)$ & $\gSL(2)^{2}$ & $\gSpin(5)\times \gSpin(7)$ & \,$\gm^{2}$ \rule{0pt}{2.5ex}\\
		
		$\gSpin(3)\times \gSpin(7)$  & $\gm\times \gSL(3)$ & $\gSpin(3)^{3}$  & \,$\gm^{2}$ \rule{0pt}{2.5ex}\\
		
		$\gSpin(3)\times \gSpin(9)$& $\gm\times \gSL(3)$ &  $\gSpin(3)^{2}\times \gSpin(5)$ & \,$\gm^{2}$ \rule{0pt}{2.5ex}\\
		
		$\gSpin(3)\times \gSpin(11)$ & $1$  & $\gSpin(3)^{2}\times \gSpin(7)$ & \,$1$ \rule{0pt}{2.5ex}\\
		
		$\gSpin(5)\times \gSpin(5)$ & $\gSL(2)^{2}$ & $\gSpin(3)^{4}$ & \,$1$ \rule{0pt}{2.5ex}\\
		
		\hline
	\end{tabular}

\end{center}

\smallskip

In cases where $S^{\circ}$ is trivial in the table above, the generic stabilizer $S$ is isomorphic to $\gmu_{2}^{2}$. Note that for $\tilde{G}=\gSpin(3)\times\gSpin(3)$ we have $S=\gSL(2)$.
\end{remark}

It follows by Lemma \ref{lem:generically} that
\begin{corollary}\label{cor:maximal}
Let $G=\tilde{G}/\gmu$, where $\tilde{G}$ and $\gmu$ are the groups defined as in Proposition \ref{maximalgeneric}. Then, $\ed(G)\leq \dim(V)-\dim (G)$, where $V=\bigotimes_{i=1}^{m}V(i)$ is the tensor product of spin representations of $\tilde{G}$.
\end{corollary}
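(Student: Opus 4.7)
The plan is to combine the two preceding results essentially verbatim; the corollary is really a bookkeeping consequence, so the main task is to make sure the representation actually descends to $G$ and that the generic freeness transfers from $\tilde{G}/\gmu$ (as appearing in Proposition \ref{prop:generic}) to the same group $G$ appearing in the statement.

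First, I would observe that the tensor product representation $\rho\colon \tilde{G}\to \GL(V)$ of $V=\bigotimes_{i=1}^{m}V(i)$ descends to a representation of $G=\tilde{G}/\gmu$. This is because, as noted in the proof of Proposition \ref{prop:generic}, the kernel of $\rho$ equals $\gmu$: each spin representation $V(i)$ of $\gSpin(2n_{i}+1)$ has the nontrivial central element acting as $-1$, so a central element $(\varepsilon_{1},\ldots,\varepsilon_{m})\in (\gmu_{2})^{m}$ acts on $V$ by the scalar $\prod_{i}\varepsilon_{i}$, which is trivial precisely on the kernel of the product map, i.e.\ on $\gmu$. Hence $\rho$ factors through an injection $G\hookrightarrow \GL(V)$.

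Next, Proposition \ref{prop:generic} asserts that the induced action of $G$ on $V$ is generically free. Applying Lemma \ref{lem:generically} to this action yields
\[
\ed(G)\leq \dim(V)-\dim(G),
\]
which is the desired inequality. The only step with any content is the identification of $\Ker(\rho)$ with $\gmu$ and the appeal to Proposition \ref{prop:generic}; once these are in place the corollary is immediate, so I do not anticipate any obstacle beyond citing the two results correctly.
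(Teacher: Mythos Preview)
Your proof is correct and follows exactly the paper's approach: the corollary is stated as an immediate consequence of Lemma \ref{lem:generically} applied to the generically free $G$-action on $V$ established in Proposition \ref{prop:generic}. Your extra verification that $\Ker(\rho)=\gmu$ is already contained in the proof of Proposition \ref{prop:generic}, so nothing additional is needed.
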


We shall need the following lemma.

\begin{lemma}\cite[Proposition 8]{VLPopov}\label{lem:transitive}
Let an algebraic group $G$ over $F$ act on finite dimensional vector spaces $V$ and $W$. Let $K\subset H\subset G$ be subgroups such that $H$ is a generic stabilizer for the $G$-action on $V$ and $K$ is a generic stabilizer for the $H$-action on $W$. Then, $K$ is a generic stabilizer for the $G$-action on $V\oplus W$. 
\end{lemma}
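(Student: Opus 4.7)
The plan is to exhibit a $G$-invariant dense open subset of $V\oplus W$ on which every stabilizer is conjugate in $G$ to $K$. First, I use the hypotheses to fix the following data: a $G$-invariant dense open $U\subseteq V$ on which every stabilizer is conjugate to $H$, and an $H$-invariant dense open $W_{0}\subseteq W$ on which every $H$-stabilizer is conjugate in $H$ to $K$.

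The first step is to produce a $G$-invariant dense open $U'\subseteq V\oplus W$ on which all stabilizers share a single $G$-conjugacy class, call it $[K']$. In characteristic zero the existence of such a principal orbit type is standard: one combines upper-semicontinuity of $(v,w)\mapsto\dim\operatorname{Stab}_G((v,w))$ with the constructibility of the orbit-type stratification (alternatively, one may invoke Luna's slice theorem when $G$ is reductive). The second step is to identify $[K']$ with $[K]$. Since the projection $\pi\colon V\oplus W\to V$ is open and $G$-equivariant, $\pi(U')$ is a nonempty $G$-invariant open of $V$ meeting $U$; by $G$-invariance I may choose $v\in U\cap\pi(U')$ whose stabilizer is exactly $H$, and then pick $(v,w)\in U'$ lying over it. Then
\[
\operatorname{Stab}_G\bigl((v,w)\bigr)=H\cap\operatorname{Stab}_G(w)=\operatorname{Stab}_H(w),
\]
and this group represents $[K']$. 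On the other hand, the fiber $U'\cap(\{v\}\oplus W)$ is a nonempty open subset of $W$ and therefore meets the $H$-invariant dense open $W_{0}$; for $w$ in this intersection, $\operatorname{Stab}_H(w)$ is conjugate in $H$, hence in $G$, to $K$. Combining the two computations, $[K']=[K]$, which is the desired conclusion.

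The step I expect to require the most care is the first one, namely establishing the existence of a generic stabilizer for the diagonal action on $V\oplus W$. Over a field of characteristic zero this is a standard fact, but it is the main technical ingredient of the argument; once it is in place, the identification of the conjugacy class is a short direct computation with stabilizers.
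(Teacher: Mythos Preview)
The paper does not supply its own proof of this lemma; it is quoted from \cite[Proposition~8]{VLPopov}, so there is no in-paper argument to compare against.

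Your second step---identifying the generic conjugacy class $[K']$ with $[K]$ once a generic stabilizer on $V\oplus W$ is known to exist---is correct and clean. The gap is in your first step. You claim that existence of a generic stabilizer for $G$ acting on $V\oplus W$ is ``standard'' in characteristic zero, via upper-semicontinuity of stabilizer dimension together with constructibility of the orbit-type stratification (or Luna's slice theorem in the reductive case). But upper-semicontinuity only produces a dense open on which the stabilizer \emph{dimension} is constant, not one on which the stabilizers lie in a single conjugacy class; and constructibility of the orbit-type strata is, to my knowledge, established only for reductive $G$ (Richardson, Luna). The lemma as stated carries no reductivity hypothesis, and its content is precisely that existence of generic stabilizers for the two component actions \emph{implies} existence on the direct sum. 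By appealing to an external principal-orbit-type theorem you are either tacitly assuming $G$ reductive or assuming the very thing to be proved. A proof in the stated generality has to build the required $G$-invariant dense open of $V\oplus W$ directly out of $U\subseteq V$ and $W_{0}\subseteq W$, which is what Popov does.

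That said, every application of the lemma in this paper has $G$ semisimple over a field of characteristic zero, so for the paper's purposes your argument---with Richardson's theorem supplying step one---goes through.
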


We generalize the generically free action of a semisimple group of type $B$ given as the quotient by the maximal central subgroup in Poposition \ref{prop:generic} to the quotient by an arbitrary central subgroup.

\begin{proposition}\label{prop:mainupperB}
Let $G=(\prod_{i=1}^{m}G_{i})/\gmu$ be a reduced semisimple group of type $B$ over $F$, where $G_{i}=\gSpin(2n_{i}+1)$ and $\gmu$ is a central subgroup such that for all $i$ either
$n_{i}\geq 7$ or $n_{i}\geq 3$ and $G_{i}$ is not a direct factor of $G$. Let $R$ be the subgroup of $Z(\prod_{i=1}^{m}G_{i})^{*}=(\Z/2\Z)^{m}$ whose quotient is the character group $\gmu^{*}$. For any $r=(r_{1},\ldots, r_{m})\in R$, we set $V[r]=\bigotimes_{i\in\operatorname{supp}(r)} V(i)$, where $\operatorname{supp}(r)=\{i\in [m]\,|\,r_i\neq 0\}$ and $V(i)$ denotes the spin representation of $G_{i}$. Then, $G$ acts generically freely on $\bigoplus_{r\in B}V[r]$ for any basis $B$ of $R$.
\end{proposition}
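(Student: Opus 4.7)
I plan to compute a generic stabilizer of $\tilde G=\prod_{i=1}^m G_i$ acting on $V=\bigoplus_{r\in B}V[r]$ and to show that it equals $\gmu$; since $\gmu$ acts trivially on each $V[r]$, this is equivalent to the generic stabilizer of $G=\tilde G/\gmu$ on $V$ being trivial.

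\textbf{Single summand.} For each $r\in B$, set $S_r=\operatorname{supp}(r)$; the action of $\tilde G$ on $V[r]=\bigotimes_{i\in S_r}V(i)$ factors through the projection onto $\tilde G_r=\prod_{i\in S_r}G_i$. I first verify that $\tilde G_r$ is non-small so that Proposition \ref{prop:generic} applies: if $|S_r|=1$, say $r=e_i$, then $e_i\in R=\gmu^\perp$ forces $\gmu\subseteq\prod_{j\neq i}Z(G_j)$, which makes $G_i$ a direct factor of $G$ and triggers the hypothesis $n_i\geq 7$; if $|S_r|\geq 2$, then $n_i\geq 3$ for every $i\in S_r$, so $\tilde G_r$ is a product of at least two copies of $\gSpin(2n+1)$ with $n\geq 3$ and does not appear on the list (\ref{small1})--(\ref{small4}). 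Proposition \ref{prop:generic} then gives that a generic stabilizer of $\tilde G_r$ on $V[r]$ is the central subgroup $\mu_r=\ker(\mu_2^{S_r}\to\mu_2)$, so a generic stabilizer of $\tilde G$ on $V[r]$ is the normal subgroup
\[H_r := \mu_r \times \prod_{i\notin S_r}G_i,\]
which in fact coincides with $\ker\bigl(\tilde G\to\GL(V[r])\bigr)$.

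\textbf{Iterating transitivity and computing the intersection.} Because each $H_r$ is normal in $\tilde G$, the stabilizer of a generic point of $V[r]$ in any subgroup $H\subseteq\tilde G$ equals $H\cap H_r$. Enumerating $B=\{r_1,\dots,r_k\}$ and applying Lemma \ref{lem:transitive} inductively, I obtain that $\bigcap_{r\in B}H_r$ is a generic stabilizer of $\tilde G$ on $V$. To evaluate this intersection I observe that it equals the kernel of the $\tilde G$-action on $V$: in order for $g=(g_i)$ to act trivially on $V[r]$, each $\rho_i(g_i)$ must be a scalar (by Schur's lemma applied to the irreducible spin representation $V(i)$) with $\prod_{i\in S_r}\rho_i(g_i)=1$, forcing $g_i\in Z(G_i)=\mu_2$ for every $i\in S_r$. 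The reduced hypothesis (equivalent to $[m]=\bigcup_{r\in B}S_r$) then places $g$ in $Z(\tilde G)=\mu_2^m$, on which the $V[r]$-action is scalar multiplication by the character $r$; the remaining conditions $\langle r,g\rangle=0$ for all $r\in B$ cut out $B^\perp=R^\perp=\gmu$.

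\textbf{Main obstacle.} The subtle part is Step 1: correctly identifying $H_r$ as a generic stabilizer depends on verifying the non-smallness hypothesis of Proposition \ref{prop:generic}, and it is exactly this step that forces the numerical conditions ($n_i\geq 7$ when $G_i$ is a direct factor of $G$, and $n_i\geq 3$ otherwise). Once $H_r$ is pinned down, the iteration via Lemma \ref{lem:transitive} is essentially formal because normality of each $H_r$ removes all conjugation ambiguity, and the final intersection reduces to the elementary duality $R^\perp=\gmu$.
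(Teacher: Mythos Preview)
Your proof is correct and rests on the same two inputs as the paper's argument---Proposition~\ref{prop:generic} for each summand $V[r]$ and Lemma~\ref{lem:transitive} to iterate over the basis $B$---but your organization is cleaner. The paper works downstairs in $G=\tilde G/\gmu$ and builds an explicit descending chain $H[0]\supset H[1]\supset\cdots\supset H[m-k]=\{1\}$ using an ad~hoc operation $\tilde G\star\nu$ (replacing certain central $\mu_2$'s by full $G_i$'s); at each step it checks that $H[p]$ is a generic stabilizer for $H[p-1]$ on $V[p]$. You instead work upstairs in $\tilde G$, observe that each kernel $H_r=\ker\bigl(\tilde G\to\GL(V[r])\bigr)=\mu_r\times\prod_{i\notin S_r}G_i$ is \emph{normal}, and use this to collapse the iteration of Lemma~\ref{lem:transitive} to a single intersection $\bigcap_{r\in B}H_r$, which you then identify with $\gmu$ via the duality $R^\perp=\gmu$ and the reduced hypothesis $\bigcup_{r\in B}S_r=[m]$. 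The normality observation is the genuine shortcut: it removes all conjugation ambiguity and makes the $\star$ construction unnecessary. Your verification that each $\tilde G_r$ is non-small (splitting on $|S_r|=1$ versus $|S_r|\geq 2$) is also more explicit than the paper's, which simply asserts ``by our assumption on $G$''.
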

\begin{proof}
Let $\gmu\simeq (\gmu_{2})^{k}$ for some $0\leq k\leq m$ and let $R$ be the subgroup of $Z(\prod_{i=1}^{m}G_{i})^{*}:=\bigoplus_{i=1}^{m}(\Z/2\Z)e_{i}$ such that $\gmu^{*}=\big(\bigoplus_{i=1}^{m}(\Z/2\Z)e_{i}\big)/R$. Then, $\dim_{\Z/2\Z}(R)=m-k$.

Let $\{r[1],\ldots,r[m-k]\}$ be a basis of $R$. Then, we have
\[\gmu=\{(a_{1},\ldots, a_{m})\in (\gmu_{2})^{m} \,|\, \prod_{i\in \operatorname{supp}(r[p])}a_i=1,\, 1\leq p\leq m-k\},\]
where $\operatorname{supp}(r[p])=\{i\in [m]\,|\,(r[p])_{i}\neq 0\}$.

For each $1\leq p\leq m-k$, we set
\begin{align*}
G[p]&= \prod_{i\in \operatorname{supp}(r[p])}G_i,\\
V[p]&= \bigotimes_{i\in \operatorname{supp}(r[p])}V(i), \text{ and }\\
\gmu[p]&=\{(a_{1},\ldots, a_{m})\in (\gmu_{2})^{m}\,|\, \prod_{i\in \operatorname{supp}(r[p])}a_i=1\},
\end{align*}
where $G_{i}=\gSpin(2n_{i}+1)$ and $V(i)$ denotes the spin representation of $G_{i}$.

Let $\tilde{G}=\prod_{i=1}^{m}G_{i}$. We view each $G[p]$ as a subgroup of $\tilde{G}$ and each $\gmu[p]$ as a subgroup of $Z(\tilde{G})$. If $\nu$ is a subgroup of $Z(\tilde{G})$, we define $\tilde{G}\star\nu$ to be a subgroup of $\tilde{G}$ given by replacing $Z(G_i)$ by $G_i$ for all $i$ such that $Z(G_i)$ is a direct factor of $\nu$. Let
\[H[p]=\frac{\tilde{G}\star\big(\cap_{i=1}^{p} \gmu[i]\big)}{\gmu}\]
for $1\leq p\leq m-k$. Then, 
\[\{1\}=H[m-k]\subset H[m-k-1]\subset \cdots \subset H[2]\subset H[1]\subset H[0]:=G.\]

Consider the $H[p-1]$-action on $V[p]$:
\[H[p-1]=\frac{\tilde{G}\star\big(\cap_{i=1}^{p-1} \gmu[i]\big)}{\gmu}\to \frac{G[p]}{G[p]\cap \gmu[p]}\stackrel{\phi_{p}}{\longrightarrow} \GL(V[p]), \]
where the first map is the projection and the second map $\phi_{p}$ is induced by the spin representation of $G[p]$. Then, since by Proposition \ref{prop:generic} and our assumption on $G$, the quotient group $G[p]/(G[p]\cap \gmu[p])$ acts generically freely on $V[p]$, we see that $H[p]$ is a generic stabilizer of the $H[p-1]$-action on $V[p]$. Therefore, by Lemma \ref{lem:transitive} we conclude that $G$ acts generically freely on $\bigoplus_{p=1}^{m-k}V[p]$.
\end{proof}

By Lemma \ref{lem:generically}, we obtain the following upper bounds for semisimple groups of type $B$.

\begin{corollary}\label{cor:upperB}
	Let $G=(\prod_{i=1}^{m}G_{i})/\gmu$ and let $V[r]=\bigotimes_{i\in\operatorname{supp}(r)} V(i)$ for any $r\in R$, where $G_{i}$, $\gmu$, $R$ are the groups defined as in Proposition \ref{prop:mainupperB} and $V(i)$ denotes the spin representaion of $G_{i}$. Then, for any basis $B$ of $R$ we have $$\ed(G)\leq \big(\sum_{r\in B} \dim V[r]\big)-\dim G.$$	
\end{corollary}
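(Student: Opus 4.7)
The plan is to deduce this statement as a straightforward consequence of the two preceding results: Proposition \ref{prop:mainupperB} provides a candidate representation on which $G$ acts generically freely, and Lemma \ref{lem:generically} converts generic freeness on a vector space into an upper bound on essential dimension. First I would fix an arbitrary basis $B$ of $R$ and set
\[V \;=\; \bigoplus_{r\in B} V[r],\]
so that the induced action of $G$ on $V$ is well-defined, since each $V[r]$ is, by construction, a representation of $\tilde G/\gmu = G$ (the subgroup $\gmu$ acts trivially on $V[r]$ precisely because $r \in R$, i.e., the character $r$ vanishes on $\gmu$).

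Next I would invoke Proposition \ref{prop:mainupperB}, which, under the hypotheses imposed on the $n_i$, guarantees that $G$ acts generically freely on $V$. Applying Lemma \ref{lem:generically} to this generically free linear action yields
\[\ed(G) \;\leq\; \dim(V) - \dim(G) \;=\; \Big(\sum_{r\in B} \dim V[r]\Big) - \dim(G),\]
using that the dimension of a direct sum is the sum of dimensions of its summands. Since $B$ was an arbitrary basis of $R$, the stated inequality holds for every such basis.

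There is essentially no obstacle to this argument, as all the substantive work lies in Proposition \ref{prop:mainupperB}, where the chain of subgroups $H[p]$ was used together with Lemma \ref{lem:transitive} to propagate the generic freeness of Proposition \ref{prop:generic} from each tensor factor to the full direct sum. Once generic freeness on $V$ is in hand, the passage to an upper bound on $\ed(G)$ is purely formal.
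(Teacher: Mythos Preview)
Your proposal is correct and follows exactly the same approach as the paper: the corollary is obtained immediately from Proposition~\ref{prop:mainupperB} (generic freeness on $\bigoplus_{r\in B}V[r]$) together with Lemma~\ref{lem:generically}. The paper does not even spell out the details you include; it simply records the corollary as a direct consequence of these two results.
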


\begin{remark}\label{rmk:assumption}
(1) In the proof of Proposition \ref{prop:mainupperB}, we make use of our assuption that $n_{i}\geq 7$ or $n_{i}\geq 3$ and $G_{i}$ is not a direct factor of $G$ to guarantee that the quotient group $G[p]/(G[p]\cap \gmu[p])$ acts generically freely on $V[p]$. Hence, the assumption can be slightly relaxed by the following assumption that  each $G[p]$ is not small for a basis $B$ of $R$. Moreover, if $B$ is a minimal basis, then the upper bound obtained from Corollary \ref{cor:upperB} coincides with the lower bound obtained from Proposition \ref{prop:lower}. For example, consider
\begin{equation*}
    G = \gSpin(3)\times\gSpin(5)\times\gSpin(7)\times\gSpin(15)/\langle (-1,-1,1,1),(-1,1,-1,1)\rangle.
\end{equation*}
Then, for a basis $B=\{(1,1,1,0),\,(0,0,0,1)\}$ the groups $\gSpin(3)\times\gSpin(5)\times\gSpin(7)$ and $\gSpin(15)$ are not small, thus 
$\ed(G)\leq 53$ by Corollary \ref{cor:upperB}. Indeed, this upper bound coincides with the lower bound of Proposition \ref{prop:lower} so $\ed(G)=53$.\end{remark}

\begin{remark}\label{rmk:typeD}
The underlying ideas of Proposition \ref{prop:mainupperB} is applicable to type $D$ as follows. Let $G=(\prod_{i=1}^{m}G_{i})/\gmu$, where $G_{i}=\gSpin(2n_{i})$ with $n_i>3$ odd and $\gmu$ is a central subgroup not containing $Z(G_i)=\gmu_{4}$ as a direct factor. Let $R$ be the subgroup of $Z(\prod_{i=1}^{m}G_{i})^{*}$ whose quotient is $\gmu^{*}$ and let $B=\{r[1],\ldots,r[k]\}$ be its minimal generating set. For each $1\leq p\leq k$, let $G[p]$ be the same as in the proof of Proposition \ref{prop:mainupperB}, 
\begin{align*}
V[p]&= 
\Big(\bigotimes_{i\in \operatorname{supp}_1(r[p])}V_{+}(i)\Big)\otimes
\Big(\bigotimes_{i\in \operatorname{supp}_2(r[p])}W(i)\Big)\otimes
\Big(\bigotimes_{i\in \operatorname{supp}_3(r[p])}V_{-}(i)\Big), \text{ and }\\
\gmu[p]&=\{(a_{1},\ldots, a_{m})\in (\gmu_{4})^{m}\,|\, \prod_{l=1}^3\prod_{i\in \operatorname{supp}_l(r[p])}a_i^l=1\},
\end{align*}
where $\operatorname{supp}_l(r[p])=\{i\in[m]\,|\, (r[p])_{i}=l\}$ for $1\leq l\leq 3$ and $V_{\pm}(i), W(i)$ are two half-spin representations and the vector representation of $G_i$ respectively. Then, the proof of Proposition \ref{prop:mainupperB} shows that if $G[p]/G[p]\cap\gmu[p]$ acts generically freely on $V[p]$ for all $p$, then $G$ acts generically freely on $\bigoplus_{p=1}^{k} V[p]$. Using \cite{Ela1, Ela2, Popov1, Popov2} one can check if the faithful action of $G[p]/(G[p]\cap\gmu[p])$ on $V[p]$ is generically free. For instance, let $G$ be as above with $R=\langle (1,1,0,\ldots,0),(1,0,1,0,\ldots,0),\ldots,(1,0,\ldots,0,1)\rangle$. Then, by \cite{Ela2, Popov2}
$G$ acts generically freely on $\bigoplus_{i=2}^m V_{+}(1)\otimes V_{+}(i)$, thus by Lemma \ref{lem:generically}, we have $\ed(G)\leq \sum_{j=2}^m 2^{n_1+n_j-2} -\sum_{i=1}^m 2n_i^2-n_i$. 
\end{remark}

\section{Lower bounds for semisimple groups of type $B$}\label{sec:lower}

Consider an exact sequence of algebraic groups over $F$
\[1\to \gmu\to G\to H\to 1,\]
where $\gmu$ is a central subgroup of $G$ such that $\gmu\simeq (\gmu_{p})^{k}$ for some prime $p$ and $k\geq 1$. Then, for any character $\chi:\gmu\to \gm$, we have the induced sequence
\begin{equation}\label{lowerexactseq}
H^{1}(F,H)\stackrel{\partial}{\to} H^{2}(F,\gmu)\stackrel{\chi_{*}}{\to} H^{2}(F,\gm)=\Br(F),
\end{equation}
where $\Br(F)$ denotes the Brauer group of $F$. For $\eta\in H^{1}(F,H)$, consider the homomorphism
\[\phi_{\eta}:\gmu^{*}\to \Br(F),\quad \chi\mapsto \chi_{*}(\partial(\eta))    \]
induced by (\ref{lowerexactseq}). Then, we obtain

\begin{theorem}\cite[Theorem 4.1]{Reichstein}, \cite[Theorems 6.1, 6.2]{Merkurjev}\label{thm:lowerbounds}
Let $G$ be an algebraic group over $F$ and let $\gmu$ be its central subgroup such that $\gmu\simeq (\gmu_{p})^{k}$ for some prime integer $p$ and some positive integer $k$. Then, for any $\eta\in H^{1}(F,G/\gmu)$ we have
\[\ed(G)\geq \min\{\sum_{\chi\in B}\ind\phi_{\eta}(\chi)\}-\dim(G),\]
where the minimum ranges over all bases $B$ of $\gmu^{*}$.
\end{theorem}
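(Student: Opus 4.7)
The plan is to follow the technique of Reichstein and Merkurjev: lift $\eta$ along the central isogeny $G\to G/\gmu$ over a generic splitting field of the Brauer classes $\alpha_i:=\phi_\eta(\chi_i)$, and then apply the Karpenko--Merkurjev incompressibility theorem on products of Severi--Brauer varieties to bound the essential dimension of the resulting torsor from below.

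Concretely, I would fix a basis $B=\{\chi_1,\ldots,\chi_k\}$ of $\gmu^*$ and let $A_i$ be the central division $F$-algebra with Brauer class $\alpha_i$. Since $\gmu\simeq(\gmu_p)^k$, each $\alpha_i$ has $p$-primary index. Set $X=\prod_{i=1}^k\SB(A_i)$ and $L=F(X)$. Over $L$ each $[A_i]_L$ is zero, and because $B$ is a basis of $\gmu^*$ the whole obstruction $\partial(\eta)_L\in H^2(L,\gmu)$ dies, so $\eta_L$ admits a lift $\tau\in H^1(L,G)$. By definition of essential dimension, $\ed(G)\geq\ed(\tau)$, and it suffices to bound $\ed(\tau)$ from below. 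If $\tau$ descends to a subfield $F\subseteq K\subseteq L$ through some $\tau'_K\in H^1(K,G)$ whose image $\eta'_K\in H^1(K,G/\gmu)$ satisfies $\partial(\eta'_K)=0$, then an analysis of the fiber of $H^1(K,G/\gmu)\to H^1(L,G/\gmu)$ over $\eta_L$, combined with Amitsur's description of $\Ker(\Br(K)\to\Br(F(X)))$, should show that each $\alpha_i$ must become trivial in $\Br(K)$; that is, $K$ simultaneously splits all of the $A_i$.

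With that established, the Karpenko--Merkurjev theorem for $p$-primary division algebras yields
\[
\td_F(K)\geq\cdim_p(X)=\sum_{i=1}^k\bigl(\ind A_i-1\bigr)=\sum_{i=1}^k\ind\alpha_i-k.
\]
Taking the infimum over all such $K$, and using $k\leq\dim G$, we obtain $\ed(\tau)\geq\sum_i\ind\alpha_i-k\geq\sum_i\ind\alpha_i-\dim G$. Minimizing over bases $B$ then produces the theorem.

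The hardest step will be invoking the Karpenko--Merkurjev incompressibility theorem, which rests on deep motivic decompositions of projective homogeneous varieties. A more delicate conceptual point is the descent analysis in the middle: from $\tau'_K\in H^1(K,G)$ one only knows that $\partial(\eta'_K)=0$, not $\partial(\eta)_K=0$, so linking the splitting of $\tau$ over $K$ to the vanishing of the original $\alpha_i$ requires a careful compatibility argument using the explicit kernels above. Finally, the ``$-\dim G$'' appearing in the statement is slightly weaker than the ``$-k$'' produced by the argument, but it is the natural form for comparison with the generically free upper bound $\dim V-\dim G$ of Section~\ref{sec:upperB}.
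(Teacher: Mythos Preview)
The paper does not give its own proof of this statement; it is quoted from \cite{Reichstein} and \cite{Merkurjev}. Your outline correctly identifies the decisive ingredient---the Karpenko--Merkurjev incompressibility of products of $p$-primary Severi--Brauer varieties---and the broad strategy of passing to a generic splitting field and lifting is the right intuition behind those references.

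That said, the descent step contains a genuine gap that Amitsur's theorem does not close. From $\tau'_L\cong\tau$ you obtain only $\eta'_L=\eta_L$ in the pointed set $H^1(L,G/\gmu)$; since $G/\gmu$ is in general non-abelian, the restriction $H^1(K,G/\gmu)\to H^1(L,G/\gmu)$ need not be injective, so one cannot conclude $\eta'=\eta_K$ and hence not $\partial(\eta)_K=0$. Amitsur's theorem computes $\Ker\bigl(\Br(F)\to\Br(F(X))\bigr)$, not $\Ker\bigl(\Br(K)\to\Br(L)\bigr)$ for an arbitrary intermediate $F\subset K\subset L$, and the latter kernel can certainly be nonzero. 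A related issue is that over $L$ there are many lifts of $\eta_L$, differing by classes in $H^1(L,\gmu)$, and the essential dimension of a single chosen lift $\tau$ may be strictly smaller than the asserted bound; the inequality is really about the whole family of lifts, not any one of them.

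The proofs in \cite{Reichstein} and \cite{Merkurjev} bypass both difficulties by working with the lifting gerbe $\cX_\eta$, the fibered category whose objects over $K$ are pairs $(T,\varphi)$ with $T$ a $G$-torsor and $\varphi$ an isomorphism from its image to $\eta_K$. This is a $\gmu$-gerbe over $\Spec F$ of class $\partial(\eta)$. A short comparison of $\cX_\eta$ with the classifying stack of $G$ yields $\ed(G)+\dim(G)\geq \ed(\cX_\eta)$, and Karpenko--Merkurjev gives $\ed_p(\cX_\eta)=\min_{B}\sum_{\chi\in B}\ind\phi_\eta(\chi)$. This packaging produces the $-\dim(G)$ term directly and never requires tracking an individual lift or analysing restriction kernels in $\Br$.
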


To apply theorem to semisimple groups of type $B$, we shall need the following lemma.
\begin{lemma}\label{lem:evenclifford}
For any $n\geq 1$, there exist a field extension $E/F$ and a $(2n+1)$-dimensional quadratic form $q$ over $E$ such that $\ind C_{0}(q)=2^{n}$, where $C_{0}(q)$ denotes the even Clifford algebra of $q$.
\end{lemma}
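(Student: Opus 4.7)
The plan is to produce $E$ and $q$ explicitly via an iterated Laurent series construction. First, I would reduce the problem to producing an \emph{even}-dimensional form whose full Clifford algebra is a division algebra. Concretely, for any nondegenerate quadratic form $\psi$ of even dimension $2n$ over a field $E$, the assignment sending a basis vector $v_i$ of $-\psi$ to $e_0 v_i$, where $e_0$ spans the summand $\langle 1\rangle$, defines an $E$-algebra isomorphism
\[
C_{0}\bigl(\langle 1\rangle\perp\psi\bigr)\ \cong\ C(-\psi),
\]
and both sides are central simple of degree $2^n$. Hence it suffices to exhibit $E/F$ and a form $\psi$ of dimension $2n$ over $E$ such that $C(\psi)$ is a division algebra.

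Second, I would take $E=F((s_1))((t_1))\cdots((s_n))((t_n))$, the iterated Laurent series field in $2n$ variables, and
\[
\psi\ =\ \langle s_1,t_1\rangle\perp\langle s_2,t_2\rangle\perp\cdots\perp\langle s_n,t_n\rangle.
\]
Using $C(\langle a,b\rangle)\cong (a,b)_E$ together with the standard recursion for the Hasse--Witt invariant of an orthogonal sum,
\[
s(\phi_1\perp\phi_2)\ =\ s(\phi_1)+s(\phi_2)+(d_1,d_2)\ \in\ \Br_{2}(E),\qquad d_i=\det\phi_i,
\]
a straightforward induction on $n$ expresses $[C(\psi)]$ as a sum of quaternion symbols in the $s_i,t_j$ whose ``leading'' part is
\[
\alpha\ :=\ (s_1,t_1)+(s_2,t_2)+\cdots+(s_n,t_n)\ \in\ \Br_{2}(E),
\]
while the remaining correction symbols involve only products of the $s_i,t_j$ with strictly smaller indices.

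Third, I would argue that $\ind C(\psi)=2^n$ by iterated residues. Over the iterated Laurent series field $E$, the tensor product
\[
D\ :=\ (s_1,t_1)_E\otimes_E\cdots\otimes_E(s_n,t_n)_E
\]
is a division algebra of degree $2^n$: successive applications of the tame symbol maps $\partial_{t_i},\partial_{s_i}$ peel off one quaternion factor at a time, so that $D$ remains nonsplit after each specialization and consequently $\ind D=2^n$. The correction symbols produced by the recursion all lie in proper Laurent sub-series subfields and are killed by the outermost residues, so they cannot lower the index of $[C(\psi)]$. Combined with the trivial bound $\ind C(\psi)\leq \deg C(\psi)=2^n$, this yields $\ind C(\psi)=2^n$; setting $q=\langle 1\rangle\perp(-\psi)$, a quadratic form of dimension $2n+1$ over $E$, gives $\ind C_{0}(q)=2^n$ as required.

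The main obstacle will be the bookkeeping for the correction symbols generated by the Hasse--Witt recursion: one has to verify that none of them conspire to cancel parts of $\alpha$ or to survive on the outermost pair $(s_n,t_n)$. This is handled cleanly by the nested Laurent series structure, which quarantines each pair $(s_i,t_i)$ into its own valuation layer so that iterated residues isolate the quaternion factors of $D$ one by one, reducing everything to the elementary base case $n=1$ (where $(s_1,t_1)$ is a division algebra over $F((s_1))((t_1))$).
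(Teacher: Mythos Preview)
Your overall strategy coincides with the paper's: exhibit a generic odd-dimensional form whose even Clifford algebra is a division algebra of degree $2^n$. The paper works over the rational function field $E=F(x_1,\dots,x_{2n})$ with $F$ algebraically closed, takes $q=\langle 1,x_1,\dots,x_{2n}\rangle$, and asserts that $C_0(q)\simeq(x_1,x_2)\otimes\cdots\otimes(x_{2n-1},x_{2n})$ is division. Your Laurent series construction and the reduction $C_0(\langle 1\rangle\perp\psi)\cong C(-\psi)$ are equally valid starting points.

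The gap lies in your treatment of the correction symbols. Applying the Hasse--Witt recursion to $\psi=\langle s_1,t_1\rangle\perp\cdots\perp\langle s_n,t_n\rangle$ produces, besides the $(s_k,t_k)$, the terms $\bigl(s_1t_1\cdots s_{k-1}t_{k-1},\,s_kt_k\bigr)$ for $2\le k\le n$; taking $k=n$ one sees that these \emph{do} involve $s_n,t_n$, do \emph{not} lie in a proper Laurent subfield, and are \emph{not} annihilated by $\partial_{t_n}$ (the residue is $[s_1t_1\cdots s_{n-1}t_{n-1}]\neq 0$). So your residue bookkeeping does not isolate the leading symbol, and the index bound does not follow as written. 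The cleanest repair is to bypass the Brauer class and use the \emph{algebra} decomposition: iterating $C(\langle a,b\rangle\perp\phi)\cong(a,b)\otimes C\bigl((-ab)\phi\bigr)$ gives
\[
C(\psi)\ \cong\ \bigotimes_{k=1}^{n}(u_k,v_k),\qquad u_k=\Bigl(\textstyle\prod_{j<k}(-s_jt_j)\Bigr)s_k,\quad v_k=\Bigl(\textstyle\prod_{j<k}(-s_jt_j)\Bigr)t_k,
\]
and the monomials $u_1,v_1,\dots,u_n,v_n$ are again algebraically independent over $F$. After this change of variables your valuation induction applies with no correction terms and shows $C(\psi)$ is division. (In fact the paper's displayed isomorphism is not literally correct either---already for $n=2$ the two Brauer classes differ by $(x_1x_2,x_3x_4)$---but the same change of variables rescues it, which is all that is needed.)
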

\begin{proof}
We may assume that $F$ is algebraically closed. Let $q=\langle 1, x_{1},\ldots, x_{2n}\rangle$ over $E=F(x_{1},\ldots, x_{2n})$, where $x_{1},\ldots, x_{2n}$ are algebraically independent variables over $F$. Then, we have
\[C_{0}(q)\simeq (x_{1}, x_{2})\tens \cdots \tens (x_{2n-1},x_{2n})\]
where each $(x_{i},x_{i+1})$ denotes the quaternion algebra. As $C_{0}(q)$ is division $E$-algebra, we have $\ind C_{0}(q)=2^{n}$.
\end{proof}

\begin{corollary}\label{cor:evenclifford}
Let $G=(\prod_{i=1}^{m}\gSpin(2n_{i}+1))/\gmu$ over $F$, where $\gmu$ is a central subgroup, and let $R$ be the subgroup of $Z(\prod_{i=1}^{m}\gSpin(2n_{i}+1))^{*}=(\Z/2\Z)^{m}$ whose quotient is the character group $\gmu^{*}$. Let $H=\prod_{i=1}^{m}\gSO(2n_{i}+1)$. Then, there exists an $H$-torsor $(q_{1},\ldots, q_{m})$ over a field extension $E/F$ such that the $E$-algebra $\bigotimes_{i=1}^{m}r_{i}C_{0}(q_{i})$ is division for every nonzero $r=(r_{1},\ldots, r_{m})\in R$.
\end{corollary}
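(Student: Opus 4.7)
The plan is to realize the $H$-torsor by a single generic construction whose $m$ Clifford invariants separate into $m$ independent batches of parameters, so that Lemma~\ref{lem:evenclifford} applies uniformly to every tensor product coming from a nonzero $r\in R$.

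For each $1\le i\le m$, I would introduce a batch $\{x_1^{(i)},\ldots,x_{2n_i}^{(i)}\}$ of algebraically independent indeterminates over $F$, with the $m$ batches mutually algebraically independent; let $E$ be a sufficiently large field containing $F$ and all these variables (for concreteness, take $E=\bar F(\{x_j^{(i)}\})$, or any extension over which Lemma~\ref{lem:evenclifford} applies to each individual batch). For each $i$, let $\tilde q_i=\langle 1,x_1^{(i)},\ldots,x_{2n_i}^{(i)}\rangle$ be the form appearing in Lemma~\ref{lem:evenclifford} and put
\[q_i=c_i\,\tilde q_i\qquad\text{with}\qquad c_i=\prod_{j=1}^{2n_i}x_j^{(i)},\]
so that $\disc(q_i)=c_i^{\,2n_i+2}\in E^{\times 2}$ is a square. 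Then $q_i$ has trivial discriminant and $(q_1,\ldots,q_m)$ defines a class in $H^1(E,\prod_i\gSO(2n_i+1))=H^1(E,H)$.

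In odd dimension, $C_0$ is a similarity invariant: the $F$-linear map $\phi\colon C_0(\lambda\tilde q)\to C_0(\tilde q)$ sending a degree-$2k$ generator $e_{i_1}\cdots e_{i_{2k}}$ to $\lambda^k e_{i_1}\cdots e_{i_{2k}}$ is easily checked to be an $F$-algebra isomorphism, so $C_0(q_i)\cong C_0(\tilde q_i)$. By the proof of Lemma~\ref{lem:evenclifford} applied to the $i$-th batch,
\[C_0(q_i)\;\cong\;\bigotimes_{j=1}^{n_i}(x_{2j-1}^{(i)},\,x_{2j}^{(i)}),\]
a division $E$-algebra of index $2^{n_i}$. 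For any nonzero $r\in R$ with $S=\operatorname{supp}(r)$, the algebra $\bigotimes_{i=1}^m r_iC_0(q_i)$ (with trivial factor wherever $r_i=0$) is therefore isomorphic to $\bigotimes_{i\in S}\bigotimes_{j=1}^{n_i}(x_{2j-1}^{(i)},x_{2j}^{(i)})$, a tensor product of $N=\sum_{i\in S}n_i$ quaternion algebras whose $2N$ slots are filled by pairwise distinct algebraically independent variables. This is precisely the configuration of Lemma~\ref{lem:evenclifford} with $n=N$ (applied over the subfield generated by these $2N$ variables), so it is a division algebra of index $2^N$; the purely transcendental extension up to $E$ preserves the index, and $2^N$ matches the degree, so $\bigotimes_{i\in S}C_0(q_i)$ is a division $E$-algebra.

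The two technical points worth flagging are the similarity invariance $C_0(\lambda\tilde q)\cong C_0(\tilde q)$ in odd dimension, which follows from a short direct Clifford-algebra computation, and the division property of the generic $N$-fold quaternion tensor product, which is exactly Lemma~\ref{lem:evenclifford}. Once both are in hand, the argument is a direct $m$-fold amplification of that lemma.
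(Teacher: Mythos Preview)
Your proposal is correct and follows essentially the same construction as the paper: independent batches of indeterminates for each factor, so that the relevant tensor products of the $C_0(q_i)$ are generic multiquaternion algebras covered by Lemma~\ref{lem:evenclifford}. The paper's version is terser---it simply notes that it suffices to make the \emph{full} tensor product $\bigotimes_{i=1}^m C_0(q_i)$ division (since any tensor factor of a central simple division algebra is again division), and it leaves implicit the scaling to trivial discriminant that you spell out---but the underlying idea is identical.
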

\begin{proof}
It suffices to show that there exists an $H$-torsor $(q_{1},\ldots, q_{m})$ over a field extension $E/F$ such that each $E$-algebra $C_{0}(q_{i})$ is division for every $1\leq i\leq m$ and the $E$-algebra $\bigotimes_{i=1}^{m}C_{0}(q_{i})$ is division, which follows by Lemma \ref{lem:evenclifford}.
\end{proof}

Now we obtain the lower bounds for the essential dimension of semisimple groups of type $B$.
\begin{proposition}\label{prop:lower}
Let $G_{i}=\gSpin(2n_{i}+1)$ over $F$ and let $G=(\prod_{i=1}^{m}G_{i})/\gmu$, where $\gmu$ is a central subgroup. Let $R$ be the subgroup of $Z(\prod_{i=1}^{m}G_{i})^{*}=(\Z/2\Z)^{m}$ whose quotient is the character group $\gmu^{*}$. Then,
\[\ed(G)\geq \min_{B}\Big\{\sum_{r\in B} \big(\prod_{i\in\operatorname{supp}(r)}2^{n_{i}}\big)\Big\}-\dim(G),\]
where the minimum ranges over all bases $B$ of $R$.
\end{proposition}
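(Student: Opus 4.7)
The plan is to invoke Theorem~\ref{thm:lowerbounds} with the algebraic group $G$ itself and the central subgroup $Z(G)$. Any central subgroup $\gmu$ of $\prod_{i}G_{i}$ lies in $Z(\prod_{i}G_{i})=(\gmu_{2})^{m}$, so $Z(G)=(\gmu_{2})^{m}/\gmu$ is an elementary abelian $2$-group, and dualising the exact sequence $1\to\gmu\to(\gmu_{2})^{m}\to Z(G)\to 1$ identifies $Z(G)^{*}$ with $R$. Moreover $G/Z(G)$ is canonically isomorphic to $H:=\prod_{i=1}^{m}\gSO(2n_{i}+1)$, and the naturality of the connecting map yields a commutative square
\[
\begin{CD}
H^{1}(E,H) @>{\partial'}>> H^{2}\bigl(E,(\gmu_{2})^{m}\bigr) \\
@| @VVV \\
H^{1}(E,G/Z(G)) @>{\partial}>> H^{2}(E,Z(G))
\end{CD}
\]
coming from the two central isogenies $1\to(\gmu_{2})^{m}\to\prod_{i}G_{i}\to H\to 1$ and $1\to Z(G)\to G\to H\to 1$.

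Next I would feed in the $H$-torsor produced by Corollary~\ref{cor:evenclifford}: an extension $E/F$ together with $\eta=(q_{1},\dots,q_{m})\in H^{1}(E,H)$ such that $\bigotimes_{i\in\operatorname{supp}(r)}C_{0}(q_{i})$ is a division $E$-algebra for every nonzero $r\in R$. Under the standard isomorphism $H^{2}\bigl(E,(\gmu_{2})^{m}\bigr)\cong\Br(E)^{m}$, the $i$-th coordinate of $\partial'(\eta)$ is the Clifford invariant $[C_{0}(q_{i})]$ of the odd-dimensional form $q_{i}$. For each $r\in R$ the character $\chi_{r}\colon Z(G)\to\gm$ lifts through the quotient $(\gmu_{2})^{m}\to Z(G)$ to the character $\tilde{\chi}_{r}\colon(a_{1},\dots,a_{m})\mapsto\prod a_{i}^{r_{i}}$, and tracing the diagram gives
\[
\phi_{\eta}(\chi_{r})=(\tilde{\chi}_{r})_{*}\bigl(\partial'(\eta)\bigr)=\sum_{i\in\operatorname{supp}(r)}[C_{0}(q_{i})]=\Big[\bigotimes_{i\in\operatorname{supp}(r)}C_{0}(q_{i})\Big].
\]
By the choice of $(q_{1},\dots,q_{m})$ this class is represented by a division algebra of degree $\prod_{i\in\operatorname{supp}(r)}\deg C_{0}(q_{i})=\prod_{i\in\operatorname{supp}(r)}2^{n_{i}}$, so $\ind\phi_{\eta}(\chi_{r})=\prod_{i\in\operatorname{supp}(r)}2^{n_{i}}$ for every nonzero $r\in R$.

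Substituting these indices into Theorem~\ref{thm:lowerbounds}, with the minimum ranging over bases $B$ of $Z(G)^{*}=R$, produces
\[
\ed(G)\geq \min_{B}\Big\{\sum_{r\in B}\prod_{i\in\operatorname{supp}(r)}2^{n_{i}}\Big\}-\dim(G),
\]
which is exactly the claimed bound. The step that needs the most care is the verification of the commutative square together with the matching of the character $\chi_{r}$ with the explicit $\tilde{\chi}_{r}$ under $(\gmu_{2})^{m}\to Z(G)$; once this functoriality of the connecting map is secured, the rest of the argument reduces to the index computation for the tensor product of even Clifford algebras guaranteed by Corollary~\ref{cor:evenclifford}.
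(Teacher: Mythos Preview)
Your proposal is correct and follows essentially the same route as the paper: apply Theorem~\ref{thm:lowerbounds} to $G$ with central subgroup $Z(G)=(\gmu_{2})^{m}/\gmu$, identify $Z(G)^{*}$ with $R$, feed in the $H$-torsor $(q_{1},\ldots,q_{m})$ from Corollary~\ref{cor:evenclifford}, and compute $\phi_{\eta}(\chi_{r})=[\bigotimes_{i\in\operatorname{supp}(r)}C_{0}(q_{i})]$ so that each index equals $\prod_{i\in\operatorname{supp}(r)}2^{n_{i}}$. The only difference is presentational: you spell out the commutative square relating the connecting maps for $\prod_{i}G_{i}\to H$ and $G\to H$, while the paper passes directly to the exact sequence $1\to(\gmu_{2})^{m}/\gmu\to G\to H\to 1$ and records $\phi_{\eta}$ without further comment.
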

\begin{proof}
Consider the exact sequence
\[ 1\to \frac{\gmu_{2}^{m}}{\gmu}\to G\to H\to 1,   \]
where $H=\prod_{i=1}^{m}\gSO(2n_{i}+1)$. Let $\eta=(q_{1},\ldots, q_{m})\in H^{1}(E,H)$ as in Corollary \ref{cor:evenclifford}. Then, we have $R=\big((\gmu_{2})^{m}/\gmu\big)^{*}$ and the homomorphism
\[\phi_{\eta}: R\to \Br(E),\quad r=(r_{1},\ldots, r_{m})\mapsto \bigotimes_{i=1}^{m}r_{i}C_{0}(q_{i}).\]
By Corollary \ref{cor:evenclifford}, we obtain 
\[\ind\big(\bigotimes_{i=1}^{m}r_{i}C_{0}(q_{i})\big)=\prod_{i\in\operatorname{supp}(r)}2^{n_{i}}\]
for any nonzero $r=(r_{1},\ldots, r_{m})$, thus the statement follows by Theorem \ref{thm:lowerbounds}.
\end{proof}

\section{Small semisimple groups of type $B$}\label{sec:small}

In this section, we consider the essential dimension of a semisimple group $G$ of type $B$ which is not covered by Theorem \ref{thm:main}. The quotients of small groups by the maximal central subgroups as in Proposition \ref{prop:generic} will be called \emph{small quotients}. We first calculate the essential dimension of the simplest cases of such $G$, namely, $G=\gSpin(3)^{m}/\gmu$ for any $m\geq 1$, where $\gmu$ is the diagonal central subgroup of $\gSpin(3)^{m}$ (see Proposition \ref{prop:sec51}). Secondly, we calculate the essential dimension of the smallest two remaining small quotients, namely, $G=(\gSpin(3)\times \gSpin(5))/\gmu$ or $(\gSpin(3)\times \gSpin(7))/\gmu$, where $\gmu$ is the diagonal central subgroup (see Proposition \ref{prop:smallesttwo}). Moreover, we provide a lower bound of the essential dimension of every small quotient introduced in the preceding section in Lemmas \ref{lem:smalllower1} and \ref{lem:smalllower2}.

We shall use the following result to obtain lower bounds for essential dimensions of groups mentioned above.

\begin{theorem}\cite[Theorem 7.8]{Reichstein00a}\label{thm:lowersmall}
	Let $G$ be a connected reductive group over $F$ and let $H$ be a finite abelian group such that the centralizer $C_{G}(H)$ is finite. Then, $\ed(G)\geq \rank(H)$.
\end{theorem}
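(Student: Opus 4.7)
The plan is to pass to the prime--local essential dimension and invoke the classical lower bound $\ed(H[p];p)=\dim_{\mathbb{F}_{p}}H[p]$ for elementary abelian $p$-groups, using the finite centralizer hypothesis to control how much a versal $H[p]$-torsor can be compressed after its image is taken in $H^{1}(-,G)$.

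First, since $H$ is abelian one has $H\subseteq C_{G}(H)$, so the finiteness of $C_{G}(H)$ forces $H$ itself to be finite. Moreover, $N_{G}(H)/C_{G}(H)$ embeds into the finite group $\operatorname{Aut}(H)$, and hence $N:=N_{G}(H)$ is also finite. This rigidity of $H$ inside $G$ is the sole structural consequence of the hypothesis. Next, choose a prime $p$ with $\dim_{\mathbb{F}_{p}}H[p]=\rank(H)$; such a $p$ exists because the rank of a finite abelian group equals the maximum $p$-rank over primes dividing $|H|$. Since $\ed(G)\geq \ed(G;p)$, it suffices to prove $\ed(G;p)\geq \dim_{\mathbb{F}_{p}}H[p]$.

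I would then fix a versal $H[p]$-torsor $\eta\in H^{1}(K,H[p])$ over a suitable extension $K/F$, for which the classical cup--product computation with $\gmu_{p}$-valued characters gives $\ed(\eta;p)=\dim_{\mathbb{F}_{p}}H[p]$. Pushing $\eta$ forward along the inclusion $\iota\colon H[p]\hookrightarrow G$ yields a $G$-torsor $\iota_{*}\eta\in H^{1}(K,G)$, and it remains to establish the incompressibility $\ed(\iota_{*}\eta;p)\geq \ed(\eta;p)$.

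The main obstacle lies precisely in this last inequality. For any field of definition $L\subseteq K$ of $\iota_{*}\eta$, one must show that $\eta$ itself descends to a finite extension of $L$. The mechanism is that the fiber of $H^{1}(L,H[p])\to H^{1}(L,G)$ over the class of $\iota_{*}\eta|_{L}$ is governed, via a twisted form of the coset sheaf $G/H[p]$, by a subquotient of $N$ acting on reductions of the $G$-torsor to $H[p]$. The finiteness of $N$, which is where the finiteness of $C_{G}(H)$ is crucially used, forces this fiber to be parametrized by the $L$-points of a finite $L$-scheme, so a representative of $\eta$ becomes available after a finite extension of $L$. Since finite extensions do not affect essential dimension at $p$, the bound $\ed(\iota_{*}\eta;p)\geq \ed(\eta;p)=\rank(H)$ follows, completing the proof. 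The delicate part is making the descent of the $H[p]$-reduction precise, especially since $C_{G}(H[p])$ may be strictly larger than $C_{G}(H)$; one may circumvent this by working throughout with $H$ (not only $H[p]$) and combining the $H$-reduction data with the $H[p]$-torsor $\eta$.
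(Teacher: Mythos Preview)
The paper does not prove this statement; it is quoted from \cite{Reichstein00a} and used as a black box, so there is no in-paper argument to compare against. Your attempt, however, has a genuine gap at the incompressibility step.

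Your central claim---that the fiber of $H^{1}(L,H[p])\to H^{1}(L,G)$ over $\iota_{*}\eta|_{L}$ is parametrized by the $L$-points of a finite $L$-scheme---is not correct. An $H[p]$-reduction of a $G$-torsor $\xi_{0}$ over $L$ is exactly an $L$-point of $\xi_{0}/H[p]$, which is an $L$-form of the homogeneous space $G/H[p]$; since $H[p]$ is finite and $G$ is positive-dimensional, this is a variety of dimension $\dim G>0$, not a finite scheme. The finiteness of $N=N_{G}(H)$ does not change this: it constrains how two reductions with isomorphic underlying $H$-torsors are related inside $\xi_{0}$, but it says nothing about the field of definition of a given $K$-point of $\xi_{0}/H[p]$. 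Hence there is no reason for the specific reduction $\eta$ to descend to a finite extension of $L$, and your descent argument breaks down. You rightly flag this as ``the delicate part,'' but it is in fact the entire content of the theorem: the conclusion already fails once the finite-centralizer hypothesis is dropped (take $H$ central in $G=\gSL_{n}$, where $\ed(G)=0$ while $\rank(H)\geq 1$), so that hypothesis must enter through a mechanism substantially stronger than the bare finiteness of $N$. Replacing $H[p]$ by $H$, as you suggest at the end, does not help either, since $G/H$ is still $(\dim G)$-dimensional.

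The proof in \cite{Reichstein00a} is geometric and of a quite different character: via resolution of singularities and a ``going down'' fixed-point theorem one produces a smooth $H$-fixed point on a compressed generically free $G$-model $X$ with $\dim X-\dim G=\ed(G)$, and then extracts the bound from the faithful $H$-action on the tangent space there, using $\Lie\,C_{G}(H)=0$ in an essential way. A purely cohomological argument along your lines would require a new idea to transfer incompressibility of the versal $H[p]$-torsor across the map $\iota_{*}$.
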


In order to find a finite abelian group of a small quotient, we consider the extraspecial $2$-subgroup $\Delta(n)$ of $\gSpin(n)$  defined by the preimage of the group of diagonal matrices $D(n)$ of $\gSO(n)$ under the natural homomorphism $\gSpin(n)\to \gSO(n)$ so that we have an exact sequence
\begin{equation}\label{deltaexact}
1\to \gmu_{2}\to \Delta(n)\stackrel{\pi}{\to} D(n)\to 1.
\end{equation}

Let $c(1),\ldots, c(n)$ be the generators of the Clifford algebra of a split quadratic form of rank $n$ satisfying the relations
\begin{equation*}
c(i)^{2}=-1,\quad c(i)c(j)+c(j)c(i)=0,\quad 1\leq i\neq j\leq n.
\end{equation*}
For any subset $I=\{i_1, \ldots, i_k\}\subset [n]$ with an even number of elements $i_1<\cdots <i_k$, we set $c(I)=c(i_1,\ldots, i_k):=c(i_1)\cdots c(i_k)$ and $c(\emptyset)=1$. Then, we have
\begin{equation*}
	\Delta(n)=\{\pm c(I)\};
\end{equation*}
see \cite[\S 1]{Wood}. Moreover, the image of $c(I)$ under $\pi$ in (\ref{deltaexact}) is the diagonal matrix $d(I)$ with $-1$'s in positions $i_{1},\ldots, i_{k}$ and $1$'s in other diagonal positions.

In general, the essential dimension of semisimple groups of type $B$ can be bounded by the essential dimension of the corresponding even Clifford groups as in the following lemma, which can be viewed as a generalization of \cite[Lemma 3.2]{CM}.

\begin{lemma}\label{Lem:red}
	Let $G=\big(\prod_{i=1}^{m}\gSpin(2n_{i}+1)\big)/\gmu$ over $F$, where $\gmu$ is a central subgroup and let $G_{\red}=\big(\prod_{i=1}^{m}\Gamma^{+}(2n_{i}+1)\big)/\gmu$, where $\Gamma^{+}(2n_{i}+1)$ denotes the even Clifford group. Then, we have
	\[\ed(G_{\red})\leq \ed(G)\leq \ed(G_{\red})+m-\rank(\gmu).\]
\end{lemma}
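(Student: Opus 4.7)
The plan is to fit $G$ and $G_{\red}$ into a short exact sequence whose quotient is a quasi-trivial torus, and then to combine Hilbert~90 with a fiber analysis on Galois cohomology. The starting point is the exact sequence
\[1 \to G \to G_{\red} \to \gm^{m} \to 1,\]
arising from the product of Clifford norms $\prod_{i=1}^{m}\Gamma^{+}(2n_{i}+1)\to\gm^{m}$ (whose kernel is $\prod_{i=1}^{m}\gSpin(2n_{i}+1)$) descended modulo the central subgroup $\gmu$.

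For the lower bound $\ed(G_{\red})\le\ed(G)$, Hilbert~90 yields $H^{1}(K,\gm^{m})=0$ for every field extension $K/F$, so $H^{1}(K,G)\twoheadrightarrow H^{1}(K,G_{\red})$. For any lift $\eta\in H^{1}(K,G)$ of a class $\xi\in H^{1}(K,G_{\red})$, functoriality shows that any field of definition of $\eta$ is also one of $\xi$, so $\ed(\xi)\le\ed(\eta)$; taking the maximum over $\xi$ gives the inequality.

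For the upper bound $\ed(G)\le\ed(G_{\red})+m-\rank(\gmu)$, the key observation is that the center $\gm^{m}\subset\prod_{i}\Gamma^{+}(2n_{i}+1)$ descends to the central subgroup $\gm^{m}/\gmu\subset G_{\red}$, and the composition
\[\gm^{m}/\gmu\hookrightarrow G_{\red}\twoheadrightarrow\gm^{m}\]
is an isogeny. Indeed, on the ambient $\gm^{m}$ the Clifford norm of a scalar $\lambda$ equals $\lambda^{2}$, so the map is the squaring morphism $\gm^{m}\to\gm^{m}$ with kernel $\gmu_{2}^{m}$; after quotienting by $\gmu\simeq\gmu_{2}^{k}$, where $k=\rank(\gmu)$, the kernel becomes $\gmu_{2}^{m}/\gmu\simeq\gmu_{2}^{m-k}$. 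The long exact sequence attached to $1\to\gmu_{2}^{m-k}\to\gm^{m}/\gmu\to\gm^{m}\to1$, combined with Hilbert~90 for the torus $\gm^{m}/\gmu$, identifies the cokernel of $(\gm^{m}/\gmu)(K)\to(K^{\times})^{m}$ with $(K^{\times}/K^{\times 2})^{m-k}$. Since $\gm^{m}/\gmu\subset G_{\red}$, the cokernel of $G_{\red}(K)\to(K^{\times})^{m}$ is a quotient of $(K^{\times}/K^{\times 2})^{m-k}$, and the same estimate transfers to the fiber of $H^{1}(K,G)\to H^{1}(K,G_{\red})$ over any $\xi$ after replacing $(G,G_{\red})$ by their inner twists by (a lift of) $\xi$, whose central isogeny class $\gm^{m}/\gmu\to\gm^{m}$ is unchanged.

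To close the argument, given a $G$-torsor $\eta$ over $K$ with image $\xi$, I would descend $\xi$ to a subfield $L\subseteq K$ with $\td(L/F)\le\ed(G_{\red})$, lift it back to $\eta_{L}\in H^{1}(L,G)$ using surjectivity from Hilbert~90, and represent the difference between $\eta$ and $\eta_{L}|_{K}$ in the fiber as $\partial(t_{1},\ldots,t_{m-k})$ for some $t_{i}\in K^{\times}$. Then $\eta$ is defined over $L(t_{1},\ldots,t_{m-k})$, whose transcendence degree over $F$ is at most $\ed(G_{\red})+m-k$; taking maxima yields the stated bound. The most delicate step is the fiber analysis: the saving of $\rank(\gmu)$ comes precisely from the Clifford norm inducing the squaring map on scalars, which collapses the apparent $(K^{\times})^{m}$ worth of ambiguity down to $(K^{\times}/K^{\times 2})^{m-k}$.
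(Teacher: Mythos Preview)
Your argument is correct and reaches both inequalities by a route different from the paper's. The paper works with the central extension $1\to(\gmu_{2})^{m}/\gmu\to G\to\prod_{i}\gSO(2n_{i}+1)\to1$ and, via \cite[Lemma~6.1]{Baek17}, identifies $H^{1}(K,G_{\red})$ with the kernel of the connecting map into $H^{2}(K,(\gmu_{2})^{m}/\gmu)$; the surjection $H^{1}(K,G)\twoheadrightarrow H^{1}(K,G_{\red})$ and the transitive action of $H^{1}(K,(\gmu_{2})^{m}/\gmu)$ on the fibers (Lemma~\ref{Lem:transitive}) then feed directly into \cite[Proposition~1.13]{BerhuyFavi} to give the upper bound in one stroke. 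You instead use the spinor-norm sequence $1\to G\to G_{\red}\to\gm^{m}\to1$: Hilbert~90 gives the surjection, and the fiber bound comes from the central split torus $\gm^{m}/\gmu\subset G_{\red}$ followed by an explicit descent. Your approach is more self-contained (no external identification of $H^{1}(G_{\red})$, no black-box fibration lemma), while the paper's is shorter given those references. One point in your last paragraph deserves to be made explicit: the connecting map from $(K^{\times})^{m}$ carries $m$ parameters, not $m-k$, and your reduction to ``$\partial(t_{1},\dots,t_{m-k})$'' is really the factorization of that map through the functorial morphism $H^{1}(-,(\gmu_{2})^{m}/\gmu)\to H^{1}(-,{}_{\eta_{L}}G)$ induced by the central inclusion $(\gmu_{2})^{m}/\gmu\hookrightarrow{}_{\eta_{L}}G$; this factorization is what legitimates the descent to $L(t_{1},\dots,t_{m-k})$.
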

\begin{proof}
	Consider the following commutative diagram with exact sequences
	\begin{equation*}
		\xymatrix{
			1 \ar@{->}[r] & (\gmu_{2})^{m}/\gmu \ar@{->}[r]\ar@{->}[d] & G \ar@{->}[r]\ar@{->}[d] & 
			\prod_{i=1}^{m}\gSO(2n_{i}+1)  \ar@{->}[r]\ar@{=}[d] & 1\\
			1 \ar@{->}[r] & (\gm)^{m}/\gmu \ar@{->}[r]  & G_{\red}\ar@{->}[r] & \prod_{i=1}^{m}\gSO(2n_{i}+1) \ar@{->}[r]& 1. \\
		}
	\end{equation*}	
	
	Then, this diagram induces an exact sequence
	\begin{equation*}
		H^{1}(K,(\gmu_{2})^{m}/\gmu)\to H^{1}(K,G)\to H^{1}(K, \prod_{i=1}^{m}\gSO(2n_{i}+1))\stackrel{\partial}{\to} H^{2}(K, (\gmu_{2})^{m}/\gmu) 
	\end{equation*}
	for any field extension $K/F$.
	As $\Ker(\partial)=H^{1}(K,G_{\red})$ (see \cite[Lemma 6.1]{Baek17}), we have 
	\begin{equation*}
		H^{1}(K,(\gmu_{2})^{m}/\gmu)\to H^{1}(K,G)\stackrel{\pi}{\twoheadrightarrow} H^{1}(K, G_{\red}).
	\end{equation*}
	By Lemma \ref{Lem:transitive}, the group $H^{1}(K,(\gmu_{2})^{m}/\gmu)$ acts transitively on the fibers of $\pi$. Hence, as $\ed((\gmu_{2})^{m}/\gmu)=m-\rank(\gmu)$, by \cite[Proposition 1.13]{BerhuyFavi}, the statement follows.
\end{proof}

\begin{lemma}\label{Lem:transitive}\cite[\S 28]{KMRT}
	Let $G$ be an algebraic group over $F$ and let $\gmu$ be a central subgroup of $G$. Then, the group $H^{1}(G,\gmu)$ acts transitively on the fibers of the map $H^{1}(K,G)\to H^{1}(K, G/\gmu)$ for any field extension $K/F$.
\end{lemma}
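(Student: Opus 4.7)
The plan is to use the standard cocycle machinery of nonabelian Galois cohomology for central extensions, as developed in \cite[\S 28]{KMRT}. Fix a separable closure $K_{\sep}$ of $K$ with absolute Galois group $\Gamma_K$. The centrality of $\gmu$ in $G$ is precisely what makes both the natural $H^{1}(K,\gmu)$-action on $H^{1}(K,G)$ and the transitivity on fibers work.

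First I would construct the action at the cocycle level. Given cocycles $a\in Z^{1}(\Gamma_K,\gmu(K_{\sep}))$ and $g\in Z^{1}(\Gamma_K,G(K_{\sep}))$, the product $\sigma\mapsto a_\sigma g_\sigma$ is again a cocycle, since
\[
a_{\sigma\tau}g_{\sigma\tau}=a_\sigma\sigma(a_\tau)\,g_\sigma\sigma(g_\tau)=a_\sigma g_\sigma\cdot \sigma(a_\tau g_\tau),
\]
where the last equality uses that $\gmu$ is central in $G$. A standard verification then shows that this descends to a well-defined action of $H^{1}(K,\gmu)$ on $H^{1}(K,G)$. Since $a_\sigma g_\sigma$ and $g_\sigma$ visibly have the same image in $(G/\gmu)(K_{\sep})$, the $H^{1}(K,\gmu)$-orbits are contained in the fibers of the map $\pi\colon H^{1}(K,G)\to H^{1}(K,G/\gmu)$.

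The content of the lemma is transitivity on fibers. Given $[g],[g']\in H^{1}(K,G)$ with $\pi([g])=\pi([g'])$, pick representing cocycles $g,g'$. Their reductions $\bar g,\bar{g'}$ are cohomologous in $Z^{1}(\Gamma_K,(G/\gmu)(K_{\sep}))$, so $\bar{g'}_\sigma=\bar c^{-1}\bar g_\sigma\sigma(\bar c)$ for some $\bar c\in (G/\gmu)(K_{\sep})$. Lift $\bar c$ to $c\in G(K_{\sep})$, using surjectivity of $G(K_{\sep})\to(G/\gmu)(K_{\sep})$, and replace $g'$ by the cohomologous cocycle $g''_\sigma:=cg'_\sigma\sigma(c)^{-1}$. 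Then $g''_\sigma$ and $g_\sigma$ coincide modulo $\gmu$, so $a_\sigma:=g''_\sigma g_\sigma^{-1}\in \gmu(K_{\sep})$. Writing out the cocycle conditions for $g$ and $g''$ and invoking centrality of $\gmu$ forces $\sigma\mapsto a_\sigma$ to be a cocycle, and $[a]\cdot[g]=[g'']=[g']$ by construction.

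The main subtlety is the reduction to pointwise agreement modulo $\gmu$, which requires the lift $\bar c\mapsto c$ and hence the surjectivity of $G(K_{\sep})\to(G/\gmu)(K_{\sep})$. Once this reduction is in place, the remaining verifications are purely formal and reduce to direct manipulations of the cocycle identities, carried out exactly as in \cite[\S 28]{KMRT}.
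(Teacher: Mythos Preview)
Your proposal is correct and is precisely the standard cocycle argument from \cite[\S 28]{KMRT}; the paper itself gives no independent proof of this lemma but simply cites that reference, so your approach coincides with the intended one. One small remark: you might note explicitly that the surjectivity $G(K_{\sep})\twoheadrightarrow (G/\gmu)(K_{\sep})$ holds because in characteristic~$0$ the group $\gmu$ is smooth, so the quotient map $G\to G/\gmu$ is an \'etale $\gmu$-torsor.
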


Now we present the first main result of this section.

\begin{proposition}\label{prop:sec51}
	Let $G=\gSpin(3)^{m}/\gmu$ over $F$, where $m\geq 2$ and $\gmu$ is the diagonal central subgroup of $\gSpin(3)^{m}$. Then, we have $\ed(G)=m+1$.
\end{proposition}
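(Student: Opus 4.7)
The plan is to establish matching upper and lower bounds of $m+1$ on $\ed(G)$.

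For the upper bound, I will use descent through the central extension
\[1 \to \gmu_{2}^{m-1} \to G \to \gPGL(2)^{m} \to 1,\]
where the kernel $\gmu_{2}^{m}/\gmu$ is identified with the center $Z(G)\cong \gmu_{2}^{m-1}$. Given $\eta \in H^{1}(K,G)$, its image is a tuple of quaternion algebras $(Q_1,\ldots,Q_m)\in H^{1}(K,\gPGL(2)^{m})$; the vanishing of the connecting obstruction in $H^{2}(K,\gmu_{2}^{m-1})$ forces $[Q_1]=\cdots=[Q_m]$ in $\Br(K)$, and since quaternions are determined by their Brauer class, $Q_i\cong Q$ for a single quaternion $Q$. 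Now $Q$ descends to some $L\subseteq K$ with $\td(L/F)\leq \ed(\gPGL(2))=2$, and since the obstructions $[Q]^{\sum r_i}=0$ vanish automatically, $(Q,\ldots,Q)$ lifts to some $\eta_0\in H^{1}(L,G)$. Then $\eta$ and $\eta_0|_K$ differ by some $g\in H^{1}(K,\gmu_{2}^{m-1})$, which descends to $M\supseteq L$ with $\td(M/L)\leq \ed(\gmu_{2}^{m-1})=m-1$. Hence $\ed(\eta)\leq m+1$.

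For the lower bound, I will apply Theorem~\ref{thm:lowersmall} by exhibiting a rank $(m+1)$ finite abelian subgroup $A\subseteq G$ with finite centralizer. Choose $T,J\in\gSL(2)(\bar{F})$ with $T^{2}=J^{2}=-I$ and $TJ=-JT$, for instance $T=\diag(\sqrt{-1},-\sqrt{-1})$ and $J=\bigl(\begin{smallmatrix}0 & -1\\ 1 & 0\end{smallmatrix}\bigr)$. Define $\bar{a},\bar{b}\in G$ as the images of the diagonal tuples $(T,\ldots,T)$ and $(J,\ldots,J)$, and $\bar{u}_i\in G$ as the image of the element of $\gmu_{2}^{m}$ with $-I$ in position $i$ for $i=1,\ldots,m-1$. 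Each has order $2$ in $G$, and they pairwise commute: the $\bar{u}_i$ are central, while $[T,J]=-I$ yields $[a_0,b_0]\in\gmu$, so $\bar{a}\bar{b}=\bar{b}\bar{a}$ in $G$. Although $T$ need only be defined over $F(\sqrt{-1})$, the class $\bar{a}$ lies in $G(F)$ because any Galois conjugate of $T$ is $\pm T$.

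Setting $A:=\langle \bar{a},\bar{b},\bar{u}_1,\ldots,\bar{u}_{m-1}\rangle$, the projection $\pi: G\twoheadrightarrow\gPGL(2)^{m}$ kills the $\bar{u}_i$ (which span $Z(G)\cong \gmu_{2}^{m-1}$) and sends $\bar{a},\bar{b}$ to the diagonal image of the Klein four group $\langle[T],[J]\rangle\subset\gPGL(2)$; thus $A=Z(G)\oplus\langle\bar{a},\bar{b}\rangle$ has rank $m+1$. A direct case analysis shows that $\langle[T],[J]\rangle$ is self-centralizing in $\gPGL(2)$, so $C_{\gPGL(2)^{m}}(\pi(A))=\langle[T],[J]\rangle^{m}$ is finite, hence $|C_G(A)|\leq |Z(G)|\cdot 4^m<\infty$. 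Theorem~\ref{thm:lowersmall} then yields $\ed(G)\geq m+1$. The main obstacle is the construction of this rank-$(m+1)$ abelian subgroup: since $\gSL(2)$ has no noncentral involutions, every abelian subgroup of $\gSL(2)^m$ has rank at most $m$, so the crucial insight is that the quotient by $\gmu$ converts the anticommuting order-$4$ pair $T,J$ into commuting involutions $\bar{a},\bar{b}$ in $G$, supplying the extra unit of rank beyond the $m-1$ contributed by $Z(G)$.
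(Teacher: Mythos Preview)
Your proof is correct and follows essentially the same approach as the paper. For the upper bound, the paper packages your fibration argument into Lemma~\ref{Lem:red} using the even Clifford group $\Gamma^{+}(3)\cong\gGL(2)$ in place of $\gPGL(2)$ (with $H^{1}(K,G_{\red})$ playing the role of your image $\{(Q,\ldots,Q)\}$); for the lower bound, under the exceptional isomorphism $\gSpin(3)\cong\gSL(2)$ your subgroup $A$ coincides with the one built in Lemma~\ref{lem:smalllower0}, your pair $T,J$ corresponding to the Clifford elements $c(1,2),c(2,3)$.
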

\begin{proof}
Let $G_{\red}=\Gamma^{+}(3)^{m}/\gmu$, where $\Gamma^{+}(3)$ denotes the even Clifford group. Then, by \cite[Lemma 6.1]{Baek17}, for any field extension $K/F$ we have
	\[H^{1}(K,G_{\red})=\{(q,\ldots, q)\,\,|\,\, q=\langle a, b, ab\rangle \text{ for some } a, b\in K^{\times}\}.\]
	Hence, $\ed(G_{\red})=2$, thus, by Lemma \ref{Lem:red} we obtain $\ed(G)\leq m+1$. The lower bound follows by Lemma \ref{lem:smalllower0} below with $n=1$.
\end{proof}

\begin{lemma}\label{lem:smalllower0}
    Let $G=\gSpin(2n+1)^m/\gmu$ over $F$, where $n\geq 1$, $m\geq 2$ and $\gmu$ is the diagonal central subgroup of $\gSpin(2n+1)^m$. Then, we have $\ed(G)\geq m+2n-1$.
\end{lemma}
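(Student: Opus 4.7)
The plan is to apply Theorem~\ref{thm:lowersmall} with a finite elementary abelian $2$-subgroup $H\subset G$ of rank $m+2n-1$ whose centralizer in $G$ is finite. The candidate is $H=A\cdot Z(G)$, where $Z(G)=(\gmu_{2})^{m}/\gmu$ is the center of $G$, of rank $m-1$, and $A$ is the image in $G$ of the diagonal embedding $\Delta(2n+1)\hookrightarrow\Delta(2n+1)^{m}\subset \gSpin(2n+1)^{m}$, $x\mapsto (x,\ldots,x)$.

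First I would verify that $A$ is abelian and meets $Z(G)$ trivially in $G$. The kernel of the composed homomorphism $\Delta(2n+1)\to G$ equals $\{x\in\Delta(2n+1): (x,\ldots,x)\in\gmu\}=Z(\Delta(2n+1))=\gmu_{2}$, so $A\cong \Delta(2n+1)/\gmu_{2}\cong D(2n+1)=(\Z/2\Z)^{2n}$. Commutativity of $A$ follows from $[(x,\ldots,x),(y,\ldots,y)]=([x,y],\ldots,[x,y])\in \gmu$, since $[x,y]\in Z(\Delta(2n+1))=\gmu_{2}$. Moreover, an element of $A$ represented by $(x,\ldots,x)$ lifts into the preimage $(\gmu_{2})^{m}$ of $Z(G)$ only if $x\in\gmu_{2}$, in which case $(x,\ldots,x)$ already lies in $\gmu$, so $A\cap Z(G)=1$ in $G$. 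Hence $H\cong A\times Z(G)$ is elementary abelian of rank $2n+(m-1)=m+2n-1$.

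The heart of the argument is the centralizer computation $C_{G}(H)=H$, which reduces to $C_{G}(A)\subseteq H$ since $Z(G)$ is central. Given $\tilde g=(\tilde g_{1},\ldots,\tilde g_{m})\in\gSpin(2n+1)^{m}$ whose image centralizes $A$, the commutators $[\tilde g_{i},x]$ must all coincide and lie in $\gmu_{2}$ for every $x\in\Delta(2n+1)$. In particular $\tilde g_{i}x\tilde g_{i}^{-1}\in\{\pm x\}\subset\Delta(2n+1)$, so each $\tilde g_{i}$ normalizes $\Delta(2n+1)$ and acts trivially on $\Delta(2n+1)/\gmu_{2}$. Projecting to $\gSO(2n+1)$, the image $\bar{\tilde g}_{i}$ therefore centralizes $D(2n+1)$, and $D(2n+1)$ is self-centralizing in $\gSO(2n+1)$ for $2n+1\geq 3$ (any off-diagonal entry of a commuting matrix is killed by conjugation with a suitable sign-diagonal $d\in D(2n+1)$ with $\epsilon_{i}\neq \epsilon_{j}$, and a commuting diagonal matrix must have entries $\pm 1$ to preserve the standard quadratic form), so $\tilde g_{i}\in \Delta(2n+1)$. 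Finally, the commutator pairing on the extraspecial group descends to a non-degenerate symplectic form $\beta$ on $V=\Delta(2n+1)/\gmu_{2}$, and the $i$-independence of $\beta(\bar{\tilde g}_{i},\bar x)=[\tilde g_{i},x]$ for every $\bar x\in V$ forces $\bar{\tilde g}_{1}=\cdots=\bar{\tilde g}_{m}$ in $V$. Consequently $\tilde g_{i}=\epsilon_{i}\tilde g_{1}$ for some $\epsilon_{i}\in\gmu_{2}$, so $\tilde g$ belongs to the product of the diagonal copy of $\Delta(2n+1)$ with $(\gmu_{2})^{m}$, whose image in $G$ is exactly $A\cdot Z(G)=H$.

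The main technical obstacle I anticipate is precisely the centralizer computation: the two-step reduction $\tilde g_{i}\in\Delta(2n+1)$ via the self-centralizing property of $D(2n+1)\subset \gSO(2n+1)$, followed by the collapse $\bar{\tilde g}_{1}=\cdots=\bar{\tilde g}_{m}$ via the non-degeneracy of $\beta$. With these in hand, Theorem~\ref{thm:lowersmall} applied to the connected reductive group $G$ and the rank-$(m+2n-1)$ finite abelian subgroup $H$ yields $\ed(G)\geq m+2n-1$.
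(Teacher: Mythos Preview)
Your proof is correct and uses the same subgroup $H$ as the paper (the image in $G$ of the diagonal copy of $\Delta(2n+1)$ multiplied by $(\gmu_2)^m$) together with the same appeal to Theorem~\ref{thm:lowersmall}. The only minor difference is that you compute $C_G(H)=H$ exactly, using the non-degenerate commutator pairing on the extraspecial group to force $\bar{\tilde g}_1=\cdots=\bar{\tilde g}_m$, whereas the paper simply notes that the image of $C_G(H)$ in $\gSO(2n+1)^m$ lies in the finite group $D(2n+1)^m$, which already yields finiteness.
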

\begin{proof}
Consider the following subgroup 
\begin{equation*}
    H'=\{(\pm c(I),\ldots, \pm c(I)) \,|\, I\subset [2n+1],\, |I| \text{ is even}\}
\end{equation*}
of $\Delta(2n+1)^m$, where $\Delta (2n+1)$ denotes the extraspecial $2$-subgroup of $\gSpin(2n+1)$. Then, $|H'|=2^{m+2n}$. Let $H=H'/\gmu$ and let $h_l=(1,\ldots,1,-1,1,\ldots,1)$, where $-1$ is placed in the $l$-th position and $1$'s are placed in other positions. Then, we have
\begin{equation*}
    H=\bigoplus_{i=1}^{2n} (\Z/2\Z)\overline{\big(c(i,i+1),\ldots,c(i,i+1)\big)}\bigoplus\big(\bigoplus_{l=1}^{m-1}(\Z/2\Z)\bar{h_l}\big).
\end{equation*}
Hence, $H$ is abelian with $\rank(H)=m+2n-1$. Since $c(I)$ corresponds to the diagonal matrix $d(I)$ in $\gSO(2n+1)$ and the intersection
\[\bigcap_{i=1}^{n-1}C_{\gSO(2n+1)^{m}}\big((d(i,i+1),\ldots, d(i,i+1))\big)\]
of the centralizers consists of $m$-tuples of the diagonal matrices in $\gSO(2n+1)$, the centralizer $C_G(H)$ is finite. Hence, by Theorem \ref{thm:lowersmall}, we have $\ed(G)\geq m+2n-1$, which completes the proof.
\end{proof}

\begin{remark}
The lower bounds in Lemma \ref{lem:smalllower0} are sharper than the lower bounds in Proposition \ref{prop:lower} if $n=1$ or $n=2$ and $m\leq 3$. In particular, we have
\begin{equation}\label{spinfive}
\ed\big(\gSpin(5)^{2}/\gmu\big)\geq 5
\end{equation}
for the diagonal central subgroup $\gmu$ of $\gSpin(5)^{2}$.
\end{remark}

Now we provide lower bounds for essential dimensions of all remaining small quotients in the following two lemmas. Note that all of the lower bounds of small quotients in (\ref{spinfive}) and Lemmas \ref{lem:smalllower1} and \ref{lem:smalllower2} are sharper than the lower bounds given by Proposition \ref{prop:lower}.

\begin{lemma}\label{lem:smalllower1}
	Let $G=\tilde{G}/\gmu$, where $\tilde{G}$ is one of the small groups in $(\ref{small2})$ and $\gmu$ is its central diagonal subgroup. Then, a lower bound for the essential dimension of $G$ is given by $:$
	
\begin{center}
	
	\begin{tabular}{ c | c }
		\hline
		$\tilde{G}$ & $\ed(G)$\rule{0pt}{2.5ex}\\
		\hline
		
		$\gSpin(3)\times \gSpin(5)$ & $\geq 4$ \rule{0pt}{2.5ex}\\
		
		$\gSpin(3)\times \gSpin(7)$  & $\geq 4$ \rule{0pt}{2.5ex}\\
		
		$\gSpin(3)\times \gSpin(9)$& $\geq 5$\rule{0pt}{2.5ex}\\
		
		$\gSpin(3)\times \gSpin(11)$ & $\geq 7$\rule{0pt}{2.5ex}\\
		
		$\gSpin(5)\times \gSpin(7)$ & $\geq 5$ \rule{0pt}{2.5ex}\\
		
		\hline
	\end{tabular}
	
\end{center}
\end{lemma}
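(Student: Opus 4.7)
The plan is to prove each of the five lower bounds by exhibiting, for the corresponding $G=\tilde{G}/\gmu$, a finite abelian subgroup $H\subseteq G$ of the claimed rank and with finite centralizer in $G$, and then invoking Theorem \ref{thm:lowersmall}. Each $H$ will be assembled from the extraspecial $2$-subgroups $\Delta(2n_{i}+1)\subseteq\gSpin(2n_{i}+1)$ of $(\ref{deltaexact})$ together with the image in $G$ of the central subgroup $(\gmu_{2}\times\gmu_{2})/\gmu\cong\gmu_{2}$, in the spirit of Lemma \ref{lem:smalllower0}.

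Writing $\tilde{G}=\gSpin(2n_{1}+1)\times\gSpin(2n_{2}+1)$ and denoting by $c_{i}(I)$ the standard Clifford product in $\Delta(2n_{i}+1)$, the key observation is that two elements $(a_{1},b_{1}),(a_{2},b_{2})\in \Delta(2n_{1}+1)\times \Delta(2n_{2}+1)$ commute in $G$ if and only if $[a_{1},a_{2}]=[b_{1},b_{2}]$ in $\{\pm 1\}$, i.e., their images in $\Delta(2n_{1}+1)/\gmu_{2}\oplus \Delta(2n_{2}+1)/\gmu_{2}=(\Z/2\Z)^{2n_{1}+2n_{2}}$ should span a subspace isotropic for the orthogonal sum $f_{2n_{1}+1}\oplus f_{2n_{2}+1}$ of the alternating commutator forms. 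The centralizer $C_{G}(H)$ will be finite as soon as the image of $H$ in the coordinate torus $D(2n_{1}+1)\times D(2n_{2}+1)\subseteq \gSO(2n_{1}+1)\times\gSO(2n_{2}+1)$ separates the $2n_{1}+2n_{2}+2$ ambient coordinates, because the centralizer of a coordinate-separating subgroup lies in the diagonal torus.

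As the prototype, for $\gSpin(3)\times\gSpin(5)$ I would take
\[
g_{1}=(c_{3}(1,3),c_{5}(3,5)),\quad g_{2}=(c_{3}(2,3),c_{5}(4,5)),\quad g_{3}=(1,c_{5}(2,3,4,5)),
\]
together with the central element $(-1,1)$, and let $H=\langle \bar{g}_{1},\bar{g}_{2},\bar{g}_{3},\overline{(-1,1)}\rangle\subseteq G$. A direct sign computation gives $[g_{1},g_{2}]=(-1,-1)$ and $[g_{1},g_{3}]=[g_{2},g_{3}]=(1,1)$, all in $\gmu$, so $H$ is abelian of rank $4$; the sign characters induced by $(\bar{g}_{1},\bar{g}_{2},\bar{g}_{3})$ at the eight ambient coordinates run through all eight elements of $(\Z/2\Z)^{3}$, so $H$ has finite centralizer and Theorem \ref{thm:lowersmall} yields $\ed(G)\geq 4$. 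For the remaining four entries I would write down analogous generator lists, mixing ``matched'' pairs $(c(I),c(J))$ whose Clifford commutator signs agree in the two factors with ``pure'' generators $(1,c(K))$, producing abelian subgroups of ranks $4,5,7,5$ whose images still separate all ambient coordinates.

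The main obstacle will be balancing the two conditions simultaneously: the isotropy bound caps the non-central rank at $n_{1}+n_{2}$, and coordinate separation needs at least $\lceil \log_{2}(2n_{1}+2n_{2}+2)\rceil$ independent patterns, but the parity constraints defining $D(2n_{1}+1)\times D(2n_{2}+1)$ tend to force coincidences among the sign patterns of a natural Lagrangian. This tension is what keeps the first four ranks below the a priori ceiling $n_{1}+n_{2}+1$, whereas for $\gSpin(3)\times\gSpin(11)$ a full Lagrangian of dimension $6$ admits a coordinate-separating arrangement on the $14$ ambient coordinates, saturating the bound at $7$.
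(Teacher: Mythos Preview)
Your approach is correct and is essentially the one the paper uses: construct a finite abelian subgroup $H\subseteq G$ out of the extraspecial $2$-groups $\Delta(2n_{i}+1)$ (plus the central class $\overline{(-1,1)}$) and invoke Theorem~\ref{thm:lowersmall}. The paper writes down a uniform explicit recipe for the generators $h_{i}=(c(I_{i1}),c(I_{i2}))$ (a fixed choice of index sets $I_{ij}$) and then appends $(1,-1)$ or one further Clifford element in the cases where this raises the rank; your framing in terms of isotropic subspaces for the commutator form and coordinate separation in $D(2n_{1}+1)\times D(2n_{2}+1)$ is the same argument in different language, and your worked example for $\gSpin(3)\times\gSpin(5)$ checks out.

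Two small remarks. First, your final paragraph is unnecessary for the lemma: you only need to \emph{exhibit} subgroups of the stated ranks, not argue that the ranks cannot be improved, so the discussion of ``tension'' and ceilings can be dropped. (Incidentally, $\gSpin(3)\times\gSpin(5)$ already attains $n_{1}+n_{2}+1=4$, so it is not among the cases ``below the ceiling.'') Second, for the four cases you leave as ``analogous generator lists,'' the paper's explicit recipe shows that such lists do exist, so your sketch is on solid ground; but when you write it out, be sure to verify both conditions (pairwise commutation modulo $\gmu$ and coordinate separation in each $\gSO$ factor) for your chosen generators, as the parity constraints you mention do require some care in the placement of the index sets.
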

\begin{proof}
Let $\tilde{G}=\prod_{j=1}^{2}\gSpin(2n_{j}+1)$ be one of the small groups in (\ref{small2}) with $n_1\leq n_2$ and let $\bar{G}=\prod_{j=1}^{2}\gSO(2n_{j}+1)$ be the corresponding adjoint group. By (\ref{spinfive}), we may assume that $n_1<n_{2}$. We set $G=\tilde{G}/\gmu$, where $\gmu$ is the diagonal central subgroup. We provide below a way to choose elements 
\begin{equation}\label{generatorhi}
h_{1}:=\big(c(I_{11}), c(I_{12})\big),\ldots ,h_{l}:=\big(c(I_{l1}),c(I_{l2})\big)\in \Delta(2n_{1}+1)\times \Delta(2n_{2}+1)
\end{equation}
for some subsets $I_{ij}\subset [2n_{j}+1]$ of even elements whose images in $G$ generate a finite abelian group $H$ of rank $l$ such that $C_{G}(H)$ is finite. Then, it follows by Theorem \ref{thm:lowersmall} that $\ed(G)\geq l$. 

We first observe that for any $j$ and subset $I\subset [2n_{j}+1]$ of even elements, the centralizer $C_{\gSO(2n_{j}+1)}\big(d(I)\big)$ of the diagonal matrix $d(I)$ becomes a block diagonal matrix consisting of two blocks corresponding to the partition $\{I,[2n_{j}+1]-I\}$ of $[2n_{j}+1]$. Hence, in order to obtain a finite centralizer $C_{G}(H)$ we set 
\begin{equation}\label{Itwo}
I_{12}=\{1,3,\ldots,2\cdot2\lceil \frac{n_{2}}{2} \rceil -1\},\, I_{i2}=\{2i-3,\, 2i-2\}
\end{equation}
for $2\leq i\leq n_{2}+1$ so that the centralizer $\cap_{i=1}^{n_{2}+1}C_{\gSO(2n_{2}+1)}\big(d(I_{i2})\big)$ consist of diagonal matrices in $\gSO(2n_{2}+1)$. Similarly, we set
\begin{equation}\label{Ione}
I_{11}=\{1,3,\ldots,2\cdot2\lceil \frac{n_{1}}{2} \rceil -1\},\, I_{k1}=\{1,\, 2\},\, I_{i1}=\{2(i+n_1-n_2)-3,\, 2(i+n_1-n_2)-2\}
\end{equation}
for $2\leq k\leq n_{2}-n_{1}+2$ and $n_{2}-n_{1}+3\leq i\leq n_{2}+1$. Then, we see that the images of $h_{1},\ldots h_{l}$ given by (\ref{Itwo}) and (\ref{Ione}) with $l=n_{2}+1$ generate a finite abelian group $H$ in $G$ with finite centralizer $C_{G}(H)$. These generators give the lower bounds for $\tilde{G}=\gSpin(3)\times \gSpin(7)$ or $\gSpin(3)\times \gSpin(9)$ in the statement.

We can improve the above lower bound by $1$ in the following two cases: If $h_{1}^{2}=\big(c(I_{11})^{2}, c(I_{12})^{2}\big)\neq \big(1,-1\big)$ in $G$, i.e., $\tilde{G}=\gSpin(3)\times\gSpin(5)$ or $\gSpin(3)\times\gSpin(11)$, we can add one more element $h_{l+1}:=\big(1,-1\big)$ to the set of generators (\ref{generatorhi}). Finally, if both $n_1,n_2>1$, i.e., $\tilde{G}=\gSpin(5)\times\gSpin(7)$, then we add one more element $h_{l+1}:=\big(c(4,4),c(5,7)\big)$ to the set of generators (\ref{generatorhi}), which completes the proof.\end{proof}

\begin{lemma}\label{lem:smalllower2}
	 Let $G=\tilde{G}/\gmu$, where $\tilde{G}$ is one of the small groups in $(\ref{small3})$, $(\ref{small4})$ and $\gmu$ is its maximal central subgroup. Then, a lower bound for the essential dimension of $G$ is given by $:$
    
\begin{center}
	
	\begin{tabular}{ c | c }
		\hline
		$\tilde{G}$ & $\ed(G)$\rule{0pt}{2.5ex}\\
		\hline
		
		$\gSpin(3)^3$ & $\geq 3$ \rule{0pt}{2.5ex}\\
		
		$\gSpin(3)^2\times \gSpin(5)$  & $\geq 4$ \rule{0pt}{2.5ex}\\
		
		$\gSpin(3)^2\times \gSpin(7)$& $\geq 5$\rule{0pt}{2.5ex}\\
		
		$\gSpin(3)^4$ & $\geq 5$\rule{0pt}{2.5ex}\\
		
		\hline
	\end{tabular}
	
\end{center}    
\end{lemma}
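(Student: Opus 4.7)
\textit{Proof plan.} My plan is to adapt the strategy of Lemma~\ref{lem:smalllower1}: for each of the four small groups $\tilde{G}$ in the statement I would exhibit explicit elements of $\prod_{i}\Delta(2n_{i}+1)\subset \tilde{G}$ whose images in $G=\tilde{G}/\gmu$ generate a finite abelian subgroup $H$ of the claimed rank with $C_{G}(H)$ finite, and then apply Theorem~\ref{thm:lowersmall} to conclude $\ed(G)\geq \rank(H)$.

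The difference from Lemma~\ref{lem:smalllower1} is that $\gmu$ is now the maximal central subgroup $\ker\bigl((\gmu_{2})^{m}\to \gmu_{2}\bigr)$, so two elements $h=(c(I_{1}),\ldots,c(I_{m}))$ and $h'=(c(J_{1}),\ldots,c(J_{m}))$ commute modulo $\gmu$ if and only if $\sum_{i=1}^{m}|I_{i}\cap J_{i}|$ is even, which is strictly weaker than the pairwise evenness condition of Lemma~\ref{lem:smalllower1}. The finite-centralizer condition is unchanged: in each $\gSO(2n_{i}+1)$-factor the projection of $H$ must distinguish all $2n_{i}+1$ standard basis vectors via their sign patterns, so that the joint centralizer in that factor is $D(2n_{i}+1)$. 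For $\tilde{G}=\gSpin(3)^{3}$ I would take
\[h_{1}=(c(1,2),c(1,2),c(1,2)),\quad h_{2}=(c(1,3),1,c(1,3)),\quad h_{3}=(1,c(1,3),c(1,3));\]
a direct check shows $[h_{i},h_{j}]\in\gmu$ for all $i,j$, each $\gSO(3)$-projection equals $D(3)$, and $h_{1}^{2}=(-1,-1,-1)$ represents the non-trivial coset of $(\gmu_{2})^{3}/\gmu\cong\gmu_{2}$, so $H\cong \Z/4\Z\times(\Z/2\Z)^{2}$ has rank $3$. For $\tilde{G}=\gSpin(3)^{2}\times \gSpin(5)$ I would take
\begin{align*}
&h_{1}=(c(1,2),1,c(1,2)),\quad h_{2}=(c(1,3),c(1,2),c(2,3)),\\
&h_{3}=(1,c(1,3),c(3,4)),\quad h_{4}=(1,c(1,2),c(4,5));
\end{align*}
the chain $d(1,2),d(2,3),d(3,4),d(4,5)$ distinguishes the five coordinates of $\gSO(5)$, every $\gSO(3)$-projection equals $D(3)$, and the images are linearly independent in $D(3)^{2}\oplus D(5)$, so $\rank(H)=4$.

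For $\tilde{G}=\gSpin(3)^{2}\times \gSpin(7)$ the same pattern yields a rank-$5$ construction using a five-element chain $d(1,2),d(2,3),d(3,4),d(5,6),d(6,7)$ in $\gSO(7)$ (which distinguishes all seven coordinates) with matching generators in the two $\gSO(3)$-factors chosen so that every pairwise sum-of-intersections is even; for $\tilde{G}=\gSpin(3)^{4}$ I would choose four generators of the form $(c(J_{1}),\ldots,c(J_{4}))$ whose images span a max-isotropic rank-$4$ subspace of $D(3)^{4}$ with every projection equal to $D(3)$, and then add one central generator from the non-trivial coset of $(\gmu_{2})^{4}/\gmu$. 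The main obstacle is the combinatorial bookkeeping for these last two cases: five generators must be coordinated so that all $\binom{5}{2}=10$ pairwise sums of intersections are even, every $\gSO$-projection distinguishes all coordinates, and the images (together with any central contribution) achieve rank $5$. Viewing the image of $H$ in $\bigoplus_{i}D(2n_{i}+1)$ as a totally isotropic subspace for the sum of the natural mod-$2$ symplectic forms $\beta_{i}(I,J)=|I\cap J|\pmod 2$ makes this search systematic.
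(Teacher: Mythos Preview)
Your plan is correct and is exactly the approach the paper takes: in each of the four cases one writes down explicit elements of $\prod_i\Delta(2n_i+1)$ whose images in $G$ generate a finite abelian subgroup $H$ of the stated rank with $C_G(H)$ finite, and then invokes Theorem~\ref{thm:lowersmall}. Your generators for $\gSpin(3)^3$ and $\gSpin(3)^2\times\gSpin(5)$ work and are minor variants of the paper's choices; for $\gSpin(3)^2\times\gSpin(7)$ and $\gSpin(3)^4$ the paper simply carries out the bookkeeping you describe by listing explicit generators (using some four-element index sets in the $\gSpin(7)$ factor rather than your two-element chain, though your chain idea also goes through).
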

\begin{proof}
The proof is similar to the proof of Lemma \ref{lem:smalllower1}. Let $\tilde{G}=\prod_{i=1}^m\gSpin(2n_i+1)$ be one of the small groups in (\ref{small3}), (\ref{small4}) and let $\bar{G}=\prod_{i=1}^m\gSO(2n_i+1)$ be the corresponding adjoint group. We set $G=\tilde{G}/\gmu$, where $\gmu$ is the maximal central subgroup. We provide below elements $h_{1},\ldots, h_{l}$ in $\prod_{i=1}^{m}\Delta (2n_{i}+1)$ whose images $\bar{h}_{1},\ldots,\bar{h}_{l}$ in $G$ generate a finite abelian group $H$ of rank $l$ such that $C_{G}(H)$ is finite. Then, by Theorem \ref{thm:lowersmall} we have $\ed(G)\geq l$. For $1\leq i\leq l$, we write $d_{i}$ for the image of $h_{i}$ under the map $\pi$ in (\ref{deltaexact}). 

\medskip

\noindent Case: $m=3$. Then, $\gmu=\{(1,1,1), (-1,-1,1), (1,-1,-1), (-1,1,-1)\}$. For $\tilde{G}=\gSpin(3)^3$, we set the generators $h_{1},\, h_{2},\, h_{3}$ equal to
\begin{equation*}
\big(c(1,3), c(1,3),c(1,3)\big), \big(c(1,2), c(1,2),1\big), \big(c(1,2), 1, c(1,2)\big),
\end{equation*}
respectively. For $\tilde{G}=\gSpin(3)^2\times\gSpin(5)$, we set $h_{1}, h_{2}, h_{3}, h_{4}$ equal to
\begin{equation*}
\big(c(1,2),c(1,2),c(2,4)\big), \big(c(1,3),1,c(1,2)\big), \big(1,c(1,3), c(3,4)\big), \big(c(1,3),c(1,3),1\big),
\end{equation*}
respectively. Finally, for $\tilde{G}=\gSpin(3)^2\times\gSpin(7)$, set $h_{1}, h_{2}, h_{3}, h_{4}, h_{5}$ equal to
\begin{align*}
&\big(c(1,2), c(1,3),c(1,2)\big), \big(c(1,2), c(1,2), c(1,3,5,7)\big),  \big(1,c(1,3),c(3,4)\big),\\
&\big(c(1,2),c(1,3),c(1,2,5,6)\big), \big(c(1,3),1,c(2,5)\big),
\end{align*}
respectively. In each case, we obtain $H=(\Z/4\Z)\bar{h}_{1}\bigoplus \big(\bigoplus_{i=2}^{l}(\Z/2\Z)\bar{h}_{i}\big)$. Since the intersection $\cap_{i=1}^{l}C_{\bar{G}}(d_{i})$ consists of diagonal matrices in $\bar{G}$, $C_{G}(H)$ is finite as well.

\medskip

\noindent Case: $m=4$. Then, $\tilde{G}=\gSpin(3)^{4}$ and $$\gmu=(\Z/2\Z)(-1,-1,1,1)\times (\Z/2\Z)(1,-1,-1,1)\times (\Z/2\Z)(1,1,-1,-1).$$
We set generators $h_{1}, h_{2}, h_{3}, h_{4}, h_{5}$ equal to
\begin{align*}
	&\big(c(1,2),c(1,2),1,1)\big), \big(c(1,2),1,c(1,2),1)\big),  \big(c(1,2),1,1,c(1,2))\big),\\
	&\big(-1,1,1,1\big), \big(c(1,3),c(1,3),c(1,3),c(1,3))\big),
\end{align*}
respectively. Then, $H=\bigoplus_{i=1}^{5}(\Z/2\Z)\bar{h}_{i}$ and the centralizer $\cap_{i=1}^{5}C_{\bar{G}}(d_{i})$ consists of diagonal matrices in $\bar{G}$, thus $C_{G}(H)$ is finite.\end{proof}

We show the lower bound in Lemma \ref{lem:smalllower1} is sharp in the following two cases, which is the second main result of this section.

\begin{proposition}\label{prop:smallesttwo}
	Let $G=(\gSpin(3)\times \gSpin(5))/\gmu$ or $(\gSpin(3)\times \gSpin(7))/\gmu$ over $F$, where $\gmu$ is the diagonal central subgroup. Then, $\ed(G)=4$.
\end{proposition}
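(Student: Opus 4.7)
The lower bound $\ed(G)\geq 4$ follows from Lemma~\ref{lem:smalllower1}, so only the matching upper bound $\ed(G)\leq 4$ remains. The plan is to apply Lemma~\ref{Lem:red}: since $m=2$ and $\rank(\gmu)=1$, this gives
\[
\ed(G)\leq \ed(G_{\red})+1,
\]
reducing the problem to showing $\ed(G_{\red})\leq 3$, where $G_{\red}=(\Gamma^{+}(3)\times\Gamma^{+}(2n+1))/\gmu$ with $n\in\{2,3\}$.

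To bound $\ed(G_{\red})$, I would describe $H^{1}(K,G_{\red})$ explicitly along the lines of \cite[Lemma~6.1]{Baek17}, as was done for $\gSpin(3)^{m}/\gmu$ in the proof of Proposition~\ref{prop:sec51}. Such torsors correspond to pairs $(q_{1},q_{2})$ of quadratic forms of dimensions $3$ and $2n+1$ with trivial discriminant whose even Clifford algebras satisfy $[C_{0}(q_{1})]=[C_{0}(q_{2})]$ in $\Br(K)[2]$. Writing $q_{1}=\langle a,b,ab\rangle$ so that $[C_{0}(q_{1})]=[(a,b)]$ is the class of a quaternion algebra, the constraint forces $C_{0}(q_{2})$ to have index at most $2$, which substantially restricts $q_{2}$ from its generic $\gSO(2n+1)$-behavior.

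The remainder of the proof exhibits, for each $n\in\{2,3\}$, an explicit versal three-parameter family of such pairs. For $n=2$, the form $q_{2}=\langle c,c,-a,-b,ab\rangle$ has trivial discriminant with $[C_{0}(q_{2})]=[(a,b)]$, and I expect every compatible pair to arise up to isometry from a suitable choice of $(a,b,c)$, yielding $\ed(G_{\red})\leq 3$. For $n=3$, a $7$-dimensional quadratic form with trivial discriminant and even Clifford algebra of index at most $2$ should be realized as a Pfister neighbor of the $3$-fold Pfister form $\langle\langle a,b,c\rangle\rangle$ (whose Clifford invariant vanishes), with the single removed slot contributing the third parameter over the $(a,b)$-plane.

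The main obstacle will be the $n=3$ case: a generic $7$-dimensional form with trivial discriminant has $\ed(\gSO(7))=6$, so verifying that imposing the quaternion-index constraint on $C_{0}(q_{2})$ cuts the parameter count down to a single additional scalar over the $(a,b)$-plane will require a careful analysis of the Albert/$3$-Pfister structure attached to such $q_{2}$, in order to guarantee that every admissible pair $(q_{1},q_{2})$ is captured by the proposed three-parameter family.
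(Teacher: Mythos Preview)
For the $\gSpin(3)\times\gSpin(5)$ case your strategy matches the paper's: both pass through $G_{\red}$ and establish $\ed(G_{\red})\leq 3$. The paper works with algebras with involution (using $\Gamma^{+}(3)\simeq\gGL(2)$, $\Gamma^{+}(5)\simeq\gGSp(4)$, Albert's common-slot theorem, and a result of Serhir--Tignol on decomposable symplectic involutions), but the underlying quadratic-form fact is the one you need: if $q_{2}$ is $5$-dimensional with trivial discriminant and $[C_{0}(q_{2})]=[(a,b)]$, then $q_{2}\perp\langle -1\rangle$ is a $6$-dimensional form in $I^{2}$ whose Clifford invariant has index $\leq 2$, hence an isotropic Albert form, forcing $q_{2}\simeq\langle 1\rangle\perp\mu\langle 1,-a,-b,ab\rangle$ for some $\mu$. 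Your explicit candidate $\langle c,c,-a,-b,ab\rangle$ is not the right family (over a field containing $\sqrt{-1}$ it is isotropic and independent of $c$), but this is easily repaired.

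For the $\gSpin(3)\times\gSpin(7)$ case there is a genuine gap. Your key claim---that a $7$-dimensional form with trivial discriminant and even Clifford algebra of index $\leq 2$ is a Pfister neighbor of some $\langle\langle a,b,c\rangle\rangle$---is false: a $7$-dimensional Pfister neighbor has \emph{trivial} Clifford invariant, so a form $q_{2}$ with $[C_{0}(q_{2})]=[(a,b)]\neq 0$ is never of this shape. More fundamentally, the reduction to $\ed(G_{\red})\leq 3$ does not obviously go through. Here $q_{2}\perp\langle -1\rangle$ is an $8$-dimensional form in $I^{2}$ with quaternion Clifford invariant, and such forms are not rigidified by anything like Albert's theorem; for instance $\langle\langle a,c\rangle\rangle\perp(-b)\langle\langle a,d\rangle\rangle$ over $F(a,b,c,d)$ is anisotropic with Clifford invariant $(a,cd)$ and has no visible three-parameter description. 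The paper abandons the $G_{\red}$ route entirely for this case and instead computes the stabilizer $S$ of a generic point for the $G$-action on $\P\big(V(7)\otimes V(3)\big)$ using the Sato--Kimura analysis of this prehomogeneous space: one finds $S\simeq(\gSL(3)\times\gm)\rtimes\gmu_{2}$, exhibits a surjection $H^{1}(K,\gSO(3)\times(\gm)_{\gamma}\times\gmu_{2})\to H^{1}(K,S)$, and concludes $\ed(G)\leq\ed(S)\leq 4$ directly. This stabilizer computation is the idea your proposal is missing.
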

\begin{proof}
By Lemma \ref{lem:smalllower1}, it suffices to show that $\ed(G)\leq 4$ for both groups $G$.

\medskip

\noindent \textbf{Case:} $G=(\gSpin(3)\times \gSpin(5))/\gmu$. Let $G_{\red}=(\gGL(2)\times \gGSp(4))/\gmu$, where $\gGSp(4)$ denotes the group of symplectic similitudes. Then, by \cite[Lemma 6.6]{Baek17} we have
\[H^{1}(K,G_{\red})=\{\big((Q_{1},\gamma), (Q_{2}\tens Q_{3},\sigma)\big)\,\,|\,\, Q_{1}+Q_{2}+Q_{3}=0 \text{ in } \Br(K)      \} \]
for some quaternion $K$-algebras $Q_{i}$, $1\leq i\leq 3$, where $\gamma$ denotes the canonical involution on $Q_{1}$ and $\sigma$ denotes a symplectic involution on $Q_{2}\tens Q_{3}$. As $Q_{1}+Q_{2}+Q_{3}=0$ in $\Br(K)$, by a theorem of Albert we obtain 
\[Q_{1}=(a,b), Q_{2}=(a,c), \text{ and } Q_{3}=(a,bc)\]
for some $a,b,c\in K^{\times}$. Hence, by \cite[Proposition]{ST} we have
\[(Q_{2}\tens Q_{3}, \sigma)\simeq (M_{2}(K)\tens Q_{1}, \operatorname{ad}_{q}\tens \gamma)\]
for some adjoint involution $\operatorname{ad}_{q}$ on the matrix algebra $M_{2}(K)$ with respect to a $2$-dimensional quadratic form $q$. Therefore, $\ed(G_{\red})\leq 3$. Hence, by Lemma \ref{Lem:red} together with the exceptional isomorphisms, $\gGamma^{+}(3)\simeq \gGL(2)$ and $\gGamma^{+}(5)\simeq \gGSp(4)$, we have $\ed(G)\leq 4$.

Indeed, as the natural morphism
\[H^{1}(K,G_{\red})\to H^{1}(K, \gGL(2)^{3}/\gmu' ),\]
where $\gmu'$ is a maximal subgroup of $(\gmu_{2})^{3}$ given by the kernel of the product map $(\gmu_{2})^{3}\to \gmu_{2}$, is surjective for any field extension $K/F$, $\ed(G_{\red})\geq \ed(\gGL(2)^{3}/\gmu')$. By \cite[Theorem 1.3 (c)]{CR}, $\ed(\gGL(2)^{3}/\gmu')=3$, thus we obtain $\ed(G_{\red})=3$.

\medskip

\noindent \textbf{Case:} $G=(\gSpin(3)\times \gSpin(7))/\gmu$. Let $\tilde{G}= \gSpin(7)\times \gSL(2) \big(\simeq \gSpin(7)\times \gSpin(3) \big)$. Consider the representation 
\[\rho:\tilde{G}\to \GL(V),\] 
where $V=V(7)\tens V(3)$. Here, we view $\gSpin(7)$ as the subgroup of the spin group $\gSpin(8)$ of the symmetric bilinear form 
\begin{equation*}
b(x,y)=x^{T}\begin{pmatrix} 0 & I_{4}\\ I_{4} & 0\end{pmatrix}y
\end{equation*} 
on $V(7)$ with respect to a basis $e_{1},\ldots, e_{8}$,  i.e.,
\begin{equation*}
\gSpin(7)=\{a\in \gSpin(8)\,|\, \chi(a)(e_{4}-e_{8})=e_{4}-e_{8}\},
\end{equation*}
where $\chi:\gSpin(8)\to \gSO(8)$ denotes the vector representation. Let 
\begin{equation*}
v:=E_{11}+E_{52}\in V,
\end{equation*}
where $V$ is identified with the vector space of $7\times 2$ matrices and $E_{ij}$ denotes the matrix with a one in position $(i,j)$ and zeros elsewhere. Then, by \cite[Proposition 26]{SK} the identity component of the stabilizer of the vector $v\in V$ is isomorphic to  
\begin{equation}\label{identitycompofG}
\gSL(3)\times\gSO(2) \,\big(\simeq  \gSL(3)\times\gm\big),
\end{equation}
which is the component of a generic stabilizer in $\tilde{G}$. Let $\tilde{S}$ be the stabilizer of the point $[v]\in \P(V)$ in $\tilde{G}$. As $\dim(V/G)=1$, the point $[v]$ belongs to the open orbit in $\P(V)$ and the identity component $\tilde{S}^{\circ}$ of $\tilde{S}$ coincides with $(\ref{identitycompofG})$.

Let $a=(e_{1}-e_{5})(e_{2}-e_{6})(e_{3}-e_{7})(e_{4}+e_{8})$. Then, $a\in \gSpin(7)$ with $a^{2}=-1$ and
\begin{equation*}
\chi(a)=\begin{pmatrix} 0 & J\\ J & 0\end{pmatrix}\in \gSO(7), \text{where } J:=\diag(1,1,1, -1).
\end{equation*}
Let $b=\begin{pmatrix} 0 & i\\ i & 0\end{pmatrix}\in \gSL(2)$, where $i$ denotes a primitive $4$th root of $1$ and let $t=(a, b)\in \tilde{G}$. Then, $\rho(t)\cdot v=iv$, thus the element $t$ generates $\gmu_{4}\in \tilde{S}$. Since $\tilde{G}$ preserves a quartic form \cite[Proposition 26]{SK}, the stabilizer $\tilde{S}$ is generated by the group in (\ref{identitycompofG}) and $t$, thus
\begin{equation*}
\tilde{S}=(\gSL(3)\times\gm)\rtimes \gmu_{4}.
\end{equation*}
Let $S$ denote the stabilizer of the point $[v]\in \P(V)$ in $G$. Then, we have
\begin{equation*}
S=\tilde{S}/\gmu=(\gSL(3)\times\gm)\rtimes \gmu_{2}.
\end{equation*}

For a field extension $K/F$, we denote by $\gSL(3)_{\gamma}$ and $(\gm)_{\gamma}$ the twists of $\gSL(3)$ and $\gm$ by the $1$-cocycle $\gamma\in Z^{1}(K,\gmu_{2})$, respectively. As $\gmu_{2}$ acts trivially on the set $H^{1}(K, \gSL(3)_{\gamma}\times(\gm)_{\gamma})$, it follows by \cite[\S 5.5 Corollary 2]{Serre} that
\begin{equation*}
H^{1}(K, S)=\coprod_{[\gamma]\in H^{1}(K,\gmu_{2})}   H^{1}(K,\gSL(3)_{\gamma})\times H^{1}(K,(\gm)_{\gamma}),\end{equation*}
thus by \cite[Exercise 17.11]{Garibaldi} we obtain a surjection
\begin{equation}\label{surjectionone}
H^{1}(K, \gSO(3)\times(\gm)_{\gamma} \times  \gmu_{2})\to H^{1}(K, S).
\end{equation}
Since the twist $(\gm)_{\gamma}$ is a norm one torus of rank $1$ for a quadratic extension, we have $\ed((\gm)_{\gamma})=1$. As $\ed(\gSO(3))=2$ and $\ed(\gmu_{2})=1$, we have
\begin{equation}\label{sogmmutwo}
\ed(\gSO(3)\times(\gm)_{\gamma}\times  \gmu_{2})=4.
\end{equation}
By \cite[Theorem 9.3]{Garibaldi} the natural morphism 
\[H^{1}(K, S)\to H^{1}(K, G)\]
is surjective, thus by (\ref{surjectionone}) and (\ref{sogmmutwo}) we have
$\ed(G)\leq \ed(S)\leq 4$.\end{proof}

\end{document}